\documentclass{amsart}

\usepackage{amsmath,amsbsy,amsfonts,amsopn,amstext}
\usepackage{euscript,amssymb,amsbsy,amsfonts,amsthm,latexsym,amsopn,amstext,amsxtra,euscript,amscd}
\usepackage{graphicx, verbatim}

\usepackage{longtable}

\usepackage{xy} 
\input xy \xyoption{all}

\usepackage{graphicx}

\usepackage{hyperref}

\hypersetup{
  colorlinks   = true, 
  urlcolor     = blue, 
  linkcolor    = blue, 
  citecolor   = red 
}

\newtheorem{prop}{Proposition}

\newtheorem{defn}{Definition} 

\newtheorem{exa}{Example} 

\newtheorem{cor}{Corollary}

\DeclareMathOperator\Sl{SL}

\newcommand\Z{\mathbb Z}
\newcommand\Q{\mathbb Q}
\newcommand\R{\mathbb R}
\newcommand\C{\mathbb C}

\def\P{\mathbb P}

\def\H{\mathcal H}

\newcommand\F{\mathcal F}            
\newcommand\J{\mathcal J}

\def\<{\langle}

\newcommand\z{\xi}  

\usepackage{amsaddr}

\usepackage[msc-links]{amsrefs}

\usepackage{tikz}
\usepackage{tikz,ifthen}
\usepackage{calc,rotating}
\usetikzlibrary{calc,fadings, intersections, decorations.pathreplacing}
\usetikzlibrary{arrows,positioning, snakes}
\usetikzlibrary{fit}
\usetikzlibrary{matrix}
\usepackage{tikz-3dplot}
\usepackage{fancyhdr}

\title{Equilibrium Laws for Julia's Zero and the Hyperbolic Zero of Binary Forms}

\author{A. Elezi}
\address{Department of Mathematics and Statistics \\ American University \\ Washington, DC, 20016. \\
Email: \, aelezi@american.edu}

\subjclass[2020]{Primary 51M10; Secondary 11E76.}

\keywords{Binary forms, Julia reduction, hyperbolic geometry,
hyperbolic center of mass, equilibrium laws.}

\date{}                                           

\begin{document}
\pagestyle{fancy}
\fancyhead[L]{Artur Elezi}
\fancyhead[R]{Julia's Zero and the Hyperbolic Zero}
\maketitle
\theoremstyle{remark}
\newtheorem{remark}{Remark} 


\begin{abstract}
For a real binary form with no real roots, Julia's zero $\xi_J(F)$ and the
hyperbolic zero $\xi_H(F)$ are two $\mathrm{Sl}_2(\ R)$-equivariant
points in the upper half-plane $\H^2$. We show that they admit the
closely related equilibrium characterizations
\[
\sum_i (\tanh t_i){\bf v}_i=0
\qquad\text{and}\qquad
\sum_i (\sinh t_i){\bf v}_i=0,
\]
respectively, where $t_i$ and ${\bf v}_i$ are the hyperbolic distance and unit
tangent direction from the candidate point to the $i$th root. This common
framework gives geometric criteria for coincidence of the two zero maps.
They always agree for binary quartics; for binary sextics they agree
exactly when the three upper-half-plane roots form an equilateral
hyperbolic triangle, or are collinear with one root the hyperbolic midpoint
of the other two. We also obtain a corresponding result for collinear
binary octics. The two equilibrium laws further reveal a sharp difference
in the influence of distant roots. A strict majority of roots confined to
a compact set keeps Julia's zero in a compact set, and the threshold
one-half is optimal. In contrast, an escaping minority can force the
hyperbolic zero to escape at linear scale. We also illustrate the
computational advantages of the explicit formula for the hyperbolic zero.
\end{abstract}

\

\section{Introduction}

\

\noindent  During the nineteenth century, considerable effort was devoted to developing
a reduction theory for binary forms, in the spirit of the classical reduction
theory of quadratic forms. One of the underlying ideas can be expressed
geometrically as follows. Given a set $A$ carrying a right
$\Sl_2(\Z)$-action, associate to each $a\in A$ a covariant point in
the upper half-plane $\H^2$; that is, construct an $\Sl_2(\Z)$-equivariant
map
\[
\xi:A\longrightarrow\H^2.
\]

\noindent  Since the modular group acts on binary forms by substitution and on $\H^2$
by M\"obius transformations, such a map gives a natural way of selecting
representatives in an $\Sl_2(\mathbb Z)$-orbit. The arithmetic objective is
to find representatives with small coefficients, or small height. For
quadratic forms this is classical; in higher degree the problem is
considerably more subtle.

\

\noindent  Julia gave the first substantial general answer in his 1917 thesis
\cite{Julia}, where he developed a reduction theory for real binary forms.
To a real binary form $F$, Julia associated a positive definite quadratic
$J_F$, now called the Julia quadratic. Positive definite real quadratic
forms, up to positive scaling, are parametrized by $\H^2$, so $J_F$
determines a point
\[
\xi_{\mathcal J}(F)\in\H^2,
\]
the Julia zero of $F$. A form is Julia-reduced when this point lies in the
standard fundamental domain for $\Sl_2(\mathbb Z)$.

\

\noindent Cremona \cite{Cremona} revisited Julia's construction, giving explicit
treatments in degrees three and four, and Cremona and Stoll
\cite{CS} subsequently developed a unified theory for real and
complex binary forms. In the complex setting the corresponding covariant
is a positive definite Hermitian form, and positive definite binary
Hermitian forms up to scaling are parametrized by hyperbolic three-space
$\H^3$. The real theory appears as the totally real slice
$\H^2\subset\H^3$. Thus hyperbolic geometry provides a natural common
framework for these reduction theories.

\

\noindent  The subject has seen renewed activity. Rosu \cite{Rosu} studies covariant
points of binary forms and their relation to the geometry and distribution
of the roots. Stoll \cite{Stoll} develops efficient methods for computing
and reducing binary forms using their associated covariant points. Recent
computational work of Kotsireas and Shaska \cite{KS}
investigates geometric reduction on a large scale and illustrates that
different geometric choices can lead to different reduced representatives.
Related work on heights by Beshaj and Shaska \cite{BS} and Beshaj
\cite{B} makes the relation between geometric reduction and the
arithmetic size of the resulting coefficients particularly relevant.

\

\noindent The purpose of this paper is to compare Julia's zero with another
geometrically natural zero map, the hyperbolic zero, and to understand both
when the two constructions agree and when they behave differently. The
hyperbolic center of mass goes back to Galperin \cite{Gal}, who
introduced it in the hyperboloid model of hyperbolic space. In
\cite{ES} we carried this construction over to the upper
half-plane model. For points
\[
\alpha_1,\ldots,\alpha_n\in\H^2,
\]
their hyperbolic center of mass is the unique point minimizing
\[
\sum_{i=1}^n\cosh d_H(P,\alpha_i).
\]

\noindent Applied to the upper-half-plane roots of a totally complex real binary
form, this defines an $\Sl_2(\mathbb R)$-equivariant zero map, which we
denote by $\xi_H$. We also obtained in \cite{ES} an explicit
formula for $\xi_H$ in every degree when the form is given as a product
of real quadratic factors.

\

\noindent  The starting point of the present paper is that Julia's zero and the
hyperbolic zero can be put into the same geometric framework. Let
\[
t_i=d_H(P,\alpha_i)
\]
\noindent  and let $v_i$ be the unit tangent vector at $P$ pointing toward
$\alpha_i$. We show that
\[
P=\xi_{\mathcal J}(F)
\quad\Longleftrightarrow\quad
\sum_{i=1}^n(\tanh t_i){\bf v}_i=0,
\]
whereas the hyperbolic zero satisfies
\[
P=\xi_H(F)
\quad\Longleftrightarrow\quad
\sum_{i=1}^n(\sinh t_i){\bf v}_i=0.
\]

\noindent  Thus the two zero maps are described by equilibrium laws of exactly the
same form, with the difference contained entirely in the radial weights
$\tanh t$ and $\sinh t$. This gives a direct way of comparing two
constructions which arise from rather different definitions.

\

\noindent  This comparison already gives useful information about when the two maps
coincide. We obtain a general symmetry principle and then consider the
problem in low degree. For binary quartics the two zero maps always agree.
For binary sextics we characterize coincidence completely: the three
upper-half-plane roots either form an equilateral hyperbolic triangle, or
they are collinear with one root the hyperbolic midpoint of the other two.
For collinear binary octics we obtain a corresponding characterization in
terms of pairs having the same hyperbolic midpoint. These results are
obtained directly from the two equilibrium equations and do not require
explicit computation of either zero map.

\

\noindent  The difference between the two weights becomes more significant when some
of the roots move far away:
\[
\tanh t\longrightarrow1,
\qquad
\sinh t\longrightarrow\infty
\qquad (t\longrightarrow\infty).
\]

\noindent  This leads to a sharp difference between the influence of distant roots
on the two constructions. We prove that if a strict majority of the
upper-half-plane roots remains in a fixed compact subset of $\H^2$, then
Julia's zero remains in a fixed compact set, independently of the positions
of all the remaining roots. Moreover, the strict-majority hypothesis is
sharp: if only half of the roots remain controlled, Julia's zero need not
remain bounded. Thus the threshold $1/2$ is intrinsic to the bounded
weight $\tanh t$.

\

\noindent  The behavior of the hyperbolic zero is quite different. Even when a strict
majority of the roots remains fixed, a minority of roots moving
horizontally to infinity can force $\xi_H(F)$ to escape at linear scale.
We give an explicit family with three fixed upper-half-plane roots and two
escaping roots for which
\[
\xi_{\mathcal J}(F_M)
\longrightarrow
1+i\sqrt{\frac{19+\sqrt{761}}{2}},
\qquad
|\xi_H(F_M)|\asymp M.
\]

\noindent  Thus the boundedness of $\tanh t$ and the unbounded growth of $\sinh t$
produce genuinely different responses to distant roots. This distinction is not special to a single outlier; it persists for an escaping minority of roots.

\

\noindent  There is also a computational difference between the two constructions.
For totally complex real forms given as products of real quadratic factors,
the hyperbolic zero has an explicit formula in terms of the coefficients
of those factors. Julia's zero is in general determined by a nonlinear
minimization or equilibrium problem. We give an example in which the two
zero maps lead to different reduced representatives and the representative
obtained from hyperbolic reduction has substantially smaller coefficient
height. This example is not intended to suggest that one reduction always
produces smaller coefficients than the other, but it shows that the
difference between the two zero maps can have arithmetic as well as
geometric consequences.

\

\noindent  The paper is organized as follows. We first recall the hyperbolic geometry
of $\H^2$ and $\H^3$ needed for the reduction theory and review Julia's
construction and the Cremona--Stoll interpretation. We then derive the
equilibrium characterization of Julia's zero for totally complex real
forms and recall the hyperbolic center of mass and its associated zero map.
We next compare the two equilibrium laws and study coincidence in low
degree. In the final section we recall the explicit formula for the
hyperbolic zero, compare the two reductions computationally, and study
their behavior when a minority of the roots escapes. We conclude with
some questions suggested by the comparison.

\

\section{The hyperbolic plane as a parameter space for positive definite quadratic forms} 

\

\noindent In this section we recall the features of the hyperbolic plane that will
be needed later and review its correspondence with positive definite
quadratic forms.

\subsection{The Poincare model of the hyperbolic plane $\H^2$} The upper-half plane is one of the models of the two-dimensional hyperbolic space. It is denoted by $\H^2$.  The geodesics of the Riemannian manifold $\H^2$, i.e, the hyperbolic equivalents of Euclidean straight lines, are either semicircles $C_{a,b}$ with diameter from $A(a,0)$ to $B(b,0)$ on the real axis, or the vertical rays $C_a$ with origin at $x=a$. In the standard literature, the points $A(a,0), B(b,0)$ are called {\bf the ideal points} of the geodesic $C_{a,b}$, likewise $A(a,0)$ and $\infty$ are the ideal points of $C_a$. The ideal points of the geodesic live in the boundary of $\H^2$.






\noindent Let $z=x+{\bf i}y,~w=u+{\bf i}v\in \H^2$. Let $z_{\infty}, w_{\infty}$ be the ideal points of the geodesic through $z,w$, where $z_{\infty}$ is the one closer to $z$. The hyperbolic distance between $z,w$ is defined as follows

\[ d_H(z,w)=\cosh^{-1}\left(1+\frac{|z-w|^2}{2yv}\right)\]
Notice that for $x=u$ and $y<v$, the geodesic is the vertical ray $C_x$. In this case \break $z_{\infty}=(x,0), w_{\infty}=\infty$ and 
\[ d_H(z,w)=\ln \left(\frac{v}{y}\right).\] 

\

\begin{figure}[htbp] 
   \centering

\begin{tikzpicture}[help lines/.style={blue!30,very thin},scale=0.6]
   \draw [help lines] (-2, 0) grid (16, 5);
    \draw[->, thick] (-2, 0) -- (16, 0) node[below]{\footnotesize $x$};
    \draw[->, thick] (0, 0) -- (0, 5) node[right]{\footnotesize $y$};

\draw[red, very thick ] (7, 0) arc (0:180:3cm);

    \draw[-, color=red,very thick] (11, 0) -- (11, 5);

    \node at (11.5, 2) {$z$};
  \node at (11.5, 4) {$w$};
    
    \foreach \x/\y in {   4/3, 2/2.2, 11/2, 11/4}
    \filldraw[blue] (\x, \y) circle(3pt);

    \node at (2, 2.7) {$z$};
    \node at (4, 3.5) {$w$};
     
    \node at (1, -0.5) {$z_{\infty}$};
    \node at (7, -0.5) {$w_{\infty}$};

\end{tikzpicture}

   \caption{The hyperbolic distance between two points $z$ and $w$ with $\Re (z) \neq \Re(w) $ and $\Re (z) = \Re(w) $ }
   \label{fig1}
\end{figure}






%

\noindent For $A(a,0)$ and $z=x+{\bf i}y\in \H^2$, an extended boundary distance function is defined following Cremona-Stoll \[ d_H(A,z):=\ln\left(\frac{(x-a)^2+y^2}{y}\right).\]

\noindent The following additive property of this distance is claimed and used in \cite{CS} and proven in \cite{ES}. 

\begin{prop} 
Let $A$ be the ideal points of a geodesic that passes through $z,w\in \H^2$ and is closer to $z$. Then $$d_H(A,z)+d_H(z,w)=d_H(A,w).\dagger$$ 
\end{prop}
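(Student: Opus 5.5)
Since both sides of $\dagger$ are built from elementary functions of coordinates, I would prove it by reducing to a normalized configuration with an isometry of $\H^2$ and then computing directly. The key preliminary step is to see how $d_H(A,z)$ transforms under $\Sl_2(\R)$. For $g=\begin{pmatrix} p&q\\ r&s\end{pmatrix}$ we have $\Im(gz)=y/|rz+s|^2$, and, when $ra+s\neq 0$, $gz-ga=(z-a)/\bigl((rz+s)(ra+s)\bigr)$. Substituting into the definition gives
\[
d_H(gA,gz)=\ln\frac{|z-a|^2}{y}-2\ln|ra+s| = d_H(A,z)+c(g,A),
\]
where $c(g,A)$ does not depend on $z$. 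Hence the difference $d_H(A,w)-d_H(A,z)$ is $\Sl_2(\R)$-invariant, provided $gA$ stays a finite ideal point. Since $d_H(z,w)$ is invariant too, the identity $\dagger$ is preserved by every such $g$.

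Next I normalize. Because $A$ is finite, a horizontal translation followed by a positive dilation would preserve the shape of the formula, but these cannot straighten a semicircle into a vertical line. So instead I will normalize so that the geodesic becomes the one with ideal points $0$ and $\infty$ while keeping $A$ finite. Concretely, I choose $g$ mapping $A$ to $0$ and the other ideal point $B$ to $\infty$; if $B=\infty$ already, a translation suffices. Then $g$ is defined at $A$, with $rA+s\neq 0$, since $gA=0$ is finite. After this, $z=iy$ and $w=iv$ lie on the imaginary axis, and the statement that $A$ is the ideal point closer to $z$ becomes $y<v$.

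The computation is then immediate. We get $d_H(0,iy)=\ln(y^2/y)=\ln y$ and $d_H(0,iv)=\ln v$, while $d_H(iy,iv)=\ln(v/y)$ by the vertical formula recalled above. The sum $\ln y+\ln(v/y)=\ln v$ gives $\dagger$.

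The main obstacle is justifying the translation between the two configurations. First, "closer to $z$" must be carried correctly under $g$; this is clear because $g$ preserves the order of points along the geodesic. Second, I need the exact cancellation of the $z$-independent constant $c(g,A)$, which is the computation above. Alternatively, I could avoid Möbius maps and parametrize the semicircle $C_{a,b}$ directly by $z=m+R\,\frac{\cos\theta+ i\sin\theta}{1}$, compute $\ln\bigl(|z-a|^2/y\bigr)=\ln\bigl(2R(1+\cos\theta)/\sin\theta\bigr)$, and compare with the known arclength $\ln\cot(\theta/2)$ between points. I would keep this as a fallback check.
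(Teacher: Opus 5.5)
Your proof is correct, but there is no in-paper proof to compare it with. The paper does not prove this proposition; it only states that the identity is claimed in \cite{CS} and proven in \cite{ES}.

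The key steps all check out:
\begin{itemize}
\item Your transformation law $d_H(gA,gz)=d_H(A,z)-2\ln|ra+s|$ is right. Both $gz-ga=(z-a)/\bigl((rz+s)(ra+s)\bigr)$ and $\operatorname{Im}(gz)=y/|rz+s|^2$ use $\det g=1$.
\item The normalization $gA=0$, $gB=\infty$ exists because $\Sl_2(\R)$ acts doubly transitively on $\R\P^1$. For example, take $x\mapsto (x-a)/(b-x)$ if $a<b$, or $x\mapsto (x-a)/(x-b)$ if $a>b$, rescaled to determinant one.
\item You use the hypothesis in the form ``$z$ lies between $A$ and $w$ on the geodesic.'' That is what the identity needs, and it is what $g$ preserves.
\end{itemize}

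Your fallback coordinate check also works on its own. With $a=m-R$ it gives $d_H(A,z)=\ln(2R)+\ln\cot(\theta/2)$, to be compared with the arclength $|\ln\tan(\theta_1/2)-\ln\tan(\theta_2/2)|$.

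Compared with that direct check, your main route shows why the identity holds. The map $z\mapsto d_H(A,z)=\ln(|z-a|^2/y)$ is the Busemann function of the ideal point $A$, normalized to vanish on the horocycle $|z-a|^2=y$. Additivity along geodesics issuing from $A$ is exactly the defining property of Busemann functions.

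Your argument also carries over unchanged to the $\H^3$ analogue stated later in the paper, whose proof is only called a ``straightforward generalization.'' For $P=z+t\mathbf{j}$ under the Poincar\'e extension of $g\in\Sl_2(\C)$, one has
\[
|gP-ga|^2=|P-a|^2\,|g'(P)|\,|g'(a)|,\qquad \operatorname{height}(gP)=t\,|g'(P)|.
\]
So $\ln\bigl((|z-a|^2+t^2)/t\bigr)$ again shifts only by the $P$-independent constant $\ln|g'(a)|=-2\ln|ra+s|$. Normalizing to the vertical geodesic over $0$ then finishes the proof exactly as in your $\H^2$ case.
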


\noindent The group $\Sl_2(\R)$ acts on the right on $\H^2$: if $M\in \Sl_2(\R)$ and $M^{-1}=\begin{pmatrix}  a & b \\ c & d  \end{pmatrix}$ then $$\displaystyle{z\cdot M:=M^{-1}z=\frac{az+b}{cz+d}}$$

\subsection{The hyperboloid model of the hyperbolic plane} Consider the Minkowski pairing in $\R^{2,1}$: 
\[ M({\bf x}, {\bf y})=-x_1y_1-x_2y_2+x_3y_3\]
 and the corresponding norm $||{\bf x}||^2=M({\bf x},{\bf x})$. Let $\H$ be the upper sheet of the hyperboloid \[\H:=\{{\bf x}=(x,y,z)\in \R^3~:~||{\bf x}||=1,~z>0\}.\] Its equation is $-x^2-y^2+z^2=1$. An isometry $\H^2\rightarrow \H$ is given by 
 \[ u+{\bf i}v \rightarrow \left(   \frac{1-u^2-v^2}{2v}, \frac{u}{v},\frac{1+u^2+v^2}{2v} \right).\]
  If ${\bf x},{\bf y}\in \H$, the hyperbolic distance $d_H({\bf x},{\bf y})$ in this model can be found via $$\cosh d_H({\bf x},{\bf y})=M({\bf x},{\bf y}).$$

\begin{defn}\cite{Gal}
Let ${\bf x}_j\in \H,~j=1,2,...,r$. Their {\bf center of mass} is defined as $${\mathcal C}={\mathcal C}_{\H}({\bf x}_1,{\bf x}_2,...,{\bf x}_r):=\displaystyle{\frac{\sum_{j=1}^r{\bf x}_j}{||\sum_{j=1}^r{\bf x}_j||}}.\dagger$$ 
\end{defn}

\

\noindent The following two propositions were established in \cite{ES}.

\begin{prop}
The center of mass ${\mathcal C}_{\H}({\bf x}_1,{\bf x}_2,...,{\bf x}_r)$ is $\Sl_2(\R)$ invariant. It is the unique point ${\bf x}\in \H$ that minimizes $\sum_{j=1}^r \cosh (d_H({\bf x},{\bf x}_j))\dagger$. 
\end{prop}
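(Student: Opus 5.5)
The plan is to work entirely in the hyperboloid model $\H$, where both claims reduce to linear algebra for the Minkowski pairing $M$. Write $S=\sum_{j=1}^r {\bf x}_j$. Each ${\bf x}_j$ lies on the upper sheet, so it is future timelike. A sum of future timelike vectors is again future timelike; this follows from the reverse Cauchy--Schwarz inequality $M({\bf x}_i,{\bf x}_j)\ge 1$ for ${\bf x}_i,{\bf x}_j\in\H$. Hence $\|S\|^2=\sum_{i,j}M({\bf x}_i,{\bf x}_j)\ge r^2>0$, and the third coordinate of $S$ is positive. This shows that ${\mathcal C}=S/\|S\|$ is well defined and lies on $\H$.

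For the minimization statement, we use that for ${\bf x}\in\H$
\[
\sum_{j=1}^r\cosh d_H({\bf x},{\bf x}_j)=\sum_{j=1}^r M({\bf x},{\bf x}_j)=M({\bf x},S)=\|S\|\,M({\bf x},{\mathcal C}),
\]
which is linear in $S$. Since ${\bf x},{\mathcal C}\in\H$, we have $M({\bf x},{\mathcal C})=\cosh d_H({\bf x},{\mathcal C})\ge 1$, with equality exactly when ${\bf x}={\mathcal C}$. This is again the reverse Cauchy--Schwarz inequality, and it is the fact I would verify directly. By homogeneity and $\Sl_2(\R)$-transitivity on $\H$, we may take ${\mathcal C}=(0,0,1)$. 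Then $M({\bf x},{\mathcal C})=z$, and $z\ge 1$ on $\H$ with equality only at $(0,0,1)$. Therefore $\sum_j\cosh d_H({\bf x},{\bf x}_j)\ge\|S\|$, with equality if and only if ${\bf x}={\mathcal C}$. This gives both existence and uniqueness of the minimizer.

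For invariance, note that $\Sl_2(\R)$ acts on $\H$, via the isometry with $\H^2$ given above, through the linear group $SO^+(2,1)$ preserving $M$. This is the standard identification of $\Sl_2(\R)$ acting on real binary quadratic forms of discriminant $-1$. For such a linear isometry $g$ we have $\sum_j g{\bf x}_j=gS$ and $\|gS\|=\|S\|$, hence ${\mathcal C}(g{\bf x}_1,\dots,g{\bf x}_r)=g\,{\mathcal C}({\bf x}_1,\dots,{\bf x}_r)$. Alternatively, equivariance follows immediately from the variational characterization: isometries preserve $d_H$, so they carry the unique minimizer to the unique minimizer. I would present this second argument, since it avoids checking linearity of the action.

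The main obstacle is justifying the reverse Cauchy--Schwarz inequality $M({\bf x},{\bf y})\ge1$, with equality iff ${\bf x}={\bf y}$, on $\H$. Given the stated formula $\cosh d_H=M$, this is immediate, because $\cosh t\ge1$ with equality iff $t=0$. So everything rests on that distance formula from the earlier part of the section, which may be assumed.
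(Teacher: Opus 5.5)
The paper gives no proof of this proposition; it quotes it from \cite{ES}, so there is no in-text argument to match step by step. Your proof is correct and complete. The identity
\[
\sum_{j=1}^r\cosh d_H({\bf x},{\bf x}_j)=M({\bf x},S)=\|S\|\,\cosh d_H({\bf x},{\mathcal C})
\]
gives existence, uniqueness and the minimum value $\|S\|$ at once. Equivariance then follows from uniqueness, because $\Sl_2(\R)$ acts on $\H$ by isometries.

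The closest argument in the paper is the proof of Proposition~\ref{centerfromgeometry}. There the sum is decomposed at a base point $P$ as $\sum_i{\bf x}_i=(\sum_i\cosh t_i)P+\sum_i(\sinh t_i){\bf v}_i$. Pairing this with $P$, using $M(P,{\bf v}_i)=0$, gives exactly your identity evaluated at ${\bf x}=P$; the tangential part is the first-order equilibrium condition. Your route is the global form of the same computation. It shows the objective depends only on $d_H({\bf x},{\mathcal C})$ and increases with it, so no critical-point or convexity analysis is needed. It also recovers the paper's corollary $\|\sum_i{\bf x}_i\|=\sum_i\cosh t_i$ as the minimum value.

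One small simplification: you do not need to normalize ${\mathcal C}=(0,0,1)$ via transitivity. The fact that $M({\bf x},{\bf y})\geq 1$, with equality iff ${\bf x}={\bf y}$, follows directly from $M=\cosh d_H$. Without any reference to the distance, it also follows from the Euclidean Cauchy--Schwarz inequality for $(1,x_1,x_2)$ and $(1,y_1,y_2)$. That gives $x_3y_3\geq 1+x_1y_1+x_2y_2$, with equality iff $(x_1,x_2)=(y_1,y_2)$.
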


\begin{prop} The hyperbolic center of mass ${\mathcal C}_{\H}({\bf x}_1,{\bf x}_2,...,{\bf x}_r)$ is the unique point $u+{\bf i}v\in \H_2$ that minimizes $$\sum_{j=1}^n \frac{(u-x_j)^2+(v-y_j)^2}{vy_j}.\dagger$$ 
\end{prop}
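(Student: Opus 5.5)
We reduce the statement to the preceding proposition by writing $\sum_j\cosh d_H$ explicitly in upper half-plane coordinates. Let $P=u+{\bf i}v$ and $\alpha_j=x_j+{\bf i}y_j$, and let ${\bf x}_j$ be the image of $\alpha_j$ under the isometry $\H^2\to\H$. The distance formula gives
\[
\cosh d_H(P,\alpha_j)=1+\frac{(u-x_j)^2+(v-y_j)^2}{2vy_j}.
\]
Summing over $j$ yields
\[
\sum_{j=1}^n\cosh d_H(P,\alpha_j)=n+\frac12\sum_{j=1}^n\frac{(u-x_j)^2+(v-y_j)^2}{vy_j}.
\]
The two objective functions therefore differ by the positive factor $\tfrac12$ and the additive constant $n$, so they have exactly the same minimizers on $\H^2$.

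Next we transport the earlier characterization across the isometry. The map $u+{\bf i}v\mapsto\bigl(\frac{1-u^2-v^2}{2v},\frac uv,\frac{1+u^2+v^2}{2v}\bigr)$ is a bijective isometry onto $\H$. Hence minimizing $\sum_j\cosh d_H({\bf x},{\bf x}_j)$ over ${\bf x}\in\H$ is the same as minimizing $\sum_j\cosh d_H(P,\alpha_j)$ over $P\in\H^2$. By the previous proposition, the first problem has the unique minimizer ${\mathcal C}_{\H}({\bf x}_1,\dots,{\bf x}_r)$. Its preimage in $\H^2$ is therefore the unique minimizer of the displayed sum, which proves the statement. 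If desired, one can sanity-check directly that the minimizer is unique: the function is $\frac{A}{v}+Bv+\frac{C(u)}{v}$ with $C$ a convex quadratic in $u$, so it tends to $+\infty$ as $v\to0$ or $v\to\infty$, and its critical point is unique.

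The only real care needed is bookkeeping. One must check that the two distance formulas agree under the isometry, that is, $M({\bf x},{\bf y})=1+\frac{|z-w|^2}{2yv}$, which is a routine expansion. One must also note the index mismatch in the statement: the sum runs to $n$ while there are $r$ points, so both should be read as $n=r$. With the earlier proposition assumed, this step is the only potential obstacle.
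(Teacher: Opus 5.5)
Your argument is correct. The paper does not prove this proposition itself; it is quoted, together with the preceding one, from \cite{ES}. Your reduction is the natural route in this setting: by the paper's distance formula each summand equals $2\bigl(\cosh d_H(P,\alpha_j)-1\bigr)$, so the statement is just the preceding proposition transported through the isometry $\H^2\to\H$. Your check that $M({\bf x},{\bf y})=1+\frac{|z-w|^2}{2yv}$ is also right.

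Two small points about your optional sanity check:
\begin{itemize}
\item The exact expansion is $\frac{C(u)}{v}+Bv-2n$, with $C(u)=\sum_j\frac{(u-x_j)^2+y_j^2}{y_j}$ and $B=\sum_j\frac{1}{y_j}$. You dropped the additive constant, which is harmless.
\item For existence of a minimizer you also need growth as $|u|\to\infty$, not only as $v\to0$ or $v\to\infty$. The term $C(u)/v$ supplies this.
\end{itemize}

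Carried through, that computation gives the unique critical point
\[
u_0=\frac{\sum_j x_j/y_j}{\sum_j 1/y_j},\qquad v_0^2=\frac{\sum_j (x_j^2+y_j^2)/y_j}{\sum_j 1/y_j}-u_0^2 .
\]
This is exactly the preimage of $\sum_j{\bf x}_j/\|\sum_j{\bf x}_j\|$, which you can see from $v=1/(X_1+X_3)$ and $u=X_2/(X_1+X_3)$ on the hyperboloid. It also matches the paper's later explicit formula. So your check, completed, is a self-contained proof that does not rely on the cited minimization property at all.
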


\

\noindent In this paper we provide yet another characterization of the hyperbolic center of mass that will be useful. First, we describe the geodesics in the hyperboloid model.

\

\noindent  For simplicity, let $P=(0,0,1)\in \H$, the general case follows by $\Sl_2 (\R)$ invariance. A geodesic through $P$ is uniquely determined by a vector that must be perpendicular to $\overrightarrow{OP}$ and of unit length:  ${\bf v}=\left<v_1,v_2,0\right>,~v_1^2+v_2^2=1$. The plane that contains the origin, $P$ and $\bf v$ intersects $\H$ in a geodesic. The parametric equation of this geodesic is 

\begin{equation} {\bf x}(t)=(\cosh t) P+(\sinh t){\bf v} \label{geodesic}
\end{equation}

\

\noindent where $t$ is the hyperbolic distance from $P$ to ${\bf x}(t)$.

\

\begin{prop} \label{centerfromgeometry} Let ${\bf x}_1,{\bf x}_2,...,{\bf x}_r$ be points in the hyperboloid $\H$. Let $t_i$ be the hyperbolic distance from $P$ to ${\bf x}_i$ and ${\bf v}_i$ the vector at $P$ in the geodesic direction from $P$ to ${\bf x}_i$. The center of mass ${\mathcal C}_{\H}({\bf x}_1,{\bf x}_2,...,{\bf x}_r)$ is the unique point that satisfies $$\displaystyle{\sum_{i=1}^r (\sinh t_i){\bf v}_i=0},$$ where $t_i=d_H(P,{\bf x}_i)$ and ${\bf v}_i$ is the direction vector from $P$ to ${\bf x}_i. \dagger$
\end{prop}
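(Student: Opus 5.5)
Using the parametrization \eqref{geodesic}, each point lies on the geodesic from $P$ in direction ${\bf v}_i$:
\[
{\bf x}_i=(\cosh t_i)P+(\sinh t_i){\bf v}_i ,
\qquad t_i=d_H(P,{\bf x}_i),
\]
where ${\bf v}_i$ is a unit vector Minkowski-orthogonal to $P$. We first treat $P=(0,0,1)$ and extend to arbitrary $P\in\H$ by $\Sl_2(\R)$ invariance. Summing gives
\[
\sum_{i=1}^r{\bf x}_i=\Big(\sum_i\cosh t_i\Big)P+\sum_i(\sinh t_i){\bf v}_i ,
\]
which splits $\sum_i{\bf x}_i$ into a component along $P$ and a component in the tangent plane $P^{\perp}$.

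For the forward direction, suppose $\sum_i(\sinh t_i){\bf v}_i=0$. Then $\sum_i{\bf x}_i$ is a positive multiple of $P$, so normalizing gives ${\mathcal C}_{\H}({\bf x}_1,\dots,{\bf x}_r)=P$.

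For the converse, suppose $P={\mathcal C}_{\H}$. Then $\sum_i{\bf x}_i=\lambda P$ with $\lambda=\|\sum_i{\bf x}_i\|>0$. Since $\sum_i(\sinh t_i){\bf v}_i$ lies in $P^{\perp}$, and $P^{\perp}\cap\R P=\{0\}$ because $M(P,P)=1\neq0$, comparing components forces $\lambda=\sum_i\cosh t_i$ and $\sum_i(\sinh t_i){\bf v}_i=0$. Uniqueness of such a point is inherited from the uniqueness of the center of mass. We must also check that $\sum_i{\bf x}_i$ is timelike with positive last coordinate, so that the normalization in the definition makes sense. This holds because a sum of future timelike vectors is future timelike, by the reverse Cauchy--Schwarz inequality $M({\bf x}_i,{\bf x}_j)\ge1$.

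As an alternative, one can use the earlier proposition characterizing ${\mathcal C}_{\H}$ as the unique minimizer of $f(P)=\sum_j\cosh d_H(P,{\bf x}_j)$. Along the geodesic $\gamma(s)=(\cosh s)P+(\sinh s){\bf w}$ we have
\[
\cosh d_H(\gamma(s),{\bf x}_i)=M(\gamma(s),{\bf x}_i),
\]
whose derivative at $s=0$ is $M({\bf w},{\bf x}_i)=-\sinh t_i\,\langle{\bf w},{\bf v}_i\rangle$. Hence the gradient of $f$ at $P$ is $-\sum_i(\sinh t_i){\bf v}_i$, and the minimizer is exactly the critical point. The critical point is unique because $f$ is strictly convex along geodesics: its second derivative along $\gamma$ is $f$ itself, which is positive. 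This version also matches the Julia equilibrium law that appears later in the paper.

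The main obstacle is bookkeeping: identifying the tangent vectors ${\bf v}_i$ with Minkowski-orthogonal unit vectors in the paper's sign convention, where $M$ restricted to $P^{\perp}$ is negative definite. This is also where one justifies the reduction to $P=(0,0,1)$, since $\Sl_2(\R)$ acts by Minkowski isometries that preserve $\H$, distances, and the decomposition above.
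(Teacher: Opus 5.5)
Your main argument is the paper's proof: you write $\sum_i{\bf x}_i=(\sum_i\cosh t_i)P+\sum_i(\sinh t_i){\bf v}_i$ and observe that this is a positive multiple of $P$ exactly when the tangential part vanishes, and your use of $P^{\perp}\cap\R P=\{0\}$ and your check that $\sum_i{\bf x}_i$ is future timelike make explicit steps the paper leaves implicit. Your alternative route, via the gradient $-\sum_i(\sinh t_i){\bf v}_i$ of $\sum_j\cosh d_H(\cdot,{\bf x}_j)$ and the identity $f''=f$ along unit-speed geodesics, is also correct but not needed.
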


\begin{proof} We compute \[\sum_{i=1}^r {\bf x}_i=(\sum_{i=1}^r \cosh t_i)P+\sum_{i=1}^r (\sinh t_i){\bf v}_i\]

\noindent The point $P$ is the hyperbolic center of ${\bf x}_1,...,{\bf x}_r$ iff \[\sum_{i=1}^r{\bf x}_i=||\sum_{i=1}^r{\bf x}_i||P,\]  i.e. iff $\displaystyle{\sum_{i=1}^r (\sinh t_i){\bf v}_i=0}$ in which case $\displaystyle{||\sum_{i=1}^r{\bf x}_i||}=\sum_{i=1}^r \cosh t_i$. 
\end{proof}

\

\begin{cor} (of the proof). If $P$ is the center of mass, then with the same notations as before $\displaystyle{||\sum_{i=1}^r{\bf x}_i||}=\sum_{i=1}^r \cosh t_i. \dagger$
\end{cor}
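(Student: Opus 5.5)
The statement to prove is the corollary: if $P$ is the center of mass, then $\|\sum_{i=1}^r{\bf x}_i\|=\sum_{i=1}^r\cosh t_i$. I would read this directly off the decomposition used in the proof of Proposition~\ref{centerfromgeometry}, rather than redoing any geometry.

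First, I place the center of mass at $P=(0,0,1)$, as in the proof of Proposition~\ref{centerfromgeometry}. This costs nothing. $\Sl_2(\R)$ acts on $\H$ by isometries of the Minkowski pairing $M$, so it preserves hyperbolic distances $t_i$ and the Minkowski norm of sums. By the earlier proposition, the center of mass is equivariant under this action.

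Next, I write each point along its geodesic from $P$, using (\ref{geodesic}):
\[
{\bf x}_i=(\cosh t_i)P+(\sinh t_i){\bf v}_i .
\]
Summing gives
\[
S:=\sum_{i=1}^r{\bf x}_i=\Bigl(\sum_{i=1}^r\cosh t_i\Bigr)P+\sum_{i=1}^r(\sinh t_i){\bf v}_i .
\]
Since $P$ is the center of mass, Proposition~\ref{centerfromgeometry} gives $\sum_i(\sinh t_i){\bf v}_i=0$, so $S=(\sum_i\cosh t_i)P$.

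Finally, I compute the norm. We have $\|P\|^2=M(P,P)=1$. Because the Minkowski norm is homogeneous, $\|S\|^2=(\sum_i\cosh t_i)^2$. The scalar $\sum_i\cosh t_i$ is positive, so taking the positive root gives $\|S\|=\sum_i\cosh t_i$, as claimed.

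There is no real obstacle here. The one point to handle carefully is the square root: $S$ must be timelike and future-pointing so that its norm is the positive root. Both hold because $S$ is a positive multiple of $P$. Equivalently, this is the normalization in the definition of $\mathcal C_{\H}$.
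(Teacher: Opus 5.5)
Your proposal is correct and follows essentially the paper's own argument: the same decomposition $\sum{\bf x}_i=(\sum\cosh t_i)P+\sum(\sinh t_i){\bf v}_i$, in which the $\sinh$-part vanishes at the center of mass. The only difference is cosmetic. The paper reads off the norm by matching $\sum{\bf x}_i=\|\sum{\bf x}_i\|P$ against this decomposition, whereas you compute $M(S,S)$ directly and take the positive root.
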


\

\subsection{$\H^2$ as a parameter space for positive definite real quadratic forms} Let $$Q(X,Z)=aX^2-2bXZ+cZ^2$$ be a binary quadratic form with real coefficients and homogeneous variables $[X,Z] \in \P^1\bf C$. Let $\Delta=ac-b^2$ be its discriminant. Then $$Q(X,Z)=a[X-(b/a)Z]^2+(\Delta/a)Z^2.$$ For both $\Delta>0$ and  $a>0$, $Q(X,Z)$ is always positive (note that $(X,Z)\neq (0,0)$ since $[X,Z] \in \P^1\R$). Such a quadratic form $Q$ is called {\bf positive definite}.  It has two roots $[\omega_{Q},1],[\overline {\omega_{Q}},1]\in \P^1\bf C$ where $\omega_{Q}=b/a+(\sqrt {\Delta}/a) {\bf i}\in \H_2$. 

\ 

\begin{defn}Let $V^+_{2,\R}$ be the space of positive definite real quadratic forms. The map $$\z:V^+_{2,\R}\rightarrow \H_2$$ which sends a positive definite quadratic form $Q(X,Z)$  to its root $\omega_{Q}\in \H_2$ is called {\bf the zero map}. 
\end{defn}

\noindent The hyperbolic plane $\H^2$ parametrizes faithfully positive definite quadratic forms up to a constant factor via the inverse 
\[ \z^{-1}(\omega)=Q_{\omega}:=(X-\omega Z)(X-\bar{\omega}Z).\]
 The group $\Sl_2(\R)$ acts on $V^+_{2,\R}$ via the linear change of variables: for a matrix $M= \begin{pmatrix}  a & b \\ c & d  \end{pmatrix}$, $$(M\cdot Q)(X,Z)=Q^M(X,Z):=Q(aX+bZ,cX+dZ).$$ Note that the $\Sl_2(\R)$ action does not change the discriminant. The zero map $\z: V^+_{2,\R}\rightarrow \H^2$ is $\Sl_2(\R)$-equivariant, i.e. $$\z(M\cdot Q)=M^{-1}\z(Q).$$

\noindent When $\Delta=0$, the quadratic form $Q(X,Z)=a[X-(b/a)Z]^2$ has a real double root $[b/a,1]$. If $a$ is a real number, we let $Q_a=(X-aZ)^2$ be the quadratic form that has a double root at $[a,1]$ while $Q_{\infty}=Z^2$ will be the quadratic form with a double root at $\infty$. We have thus established that the boundary $\R\P^1=\R \cup \infty$ of $\H^2$ parametrizes quadratic forms (up to a constant factor) with discriminant $\Delta=0$.

\

\noindent To recap: the hyperbolic plane $\H^2$ parametrizes binary quadratic forms with discriminant $\Delta>0$ and $a>0$, while its boundary parametrizes those with discriminant $\Delta=0$.

\

\noindent It has been claimed and used in \cite{Julia} and \cite{CS} that this parametrization is not just a bijection between sets; the hyperbolic geometry of $\H^2$ represents faithfully the algebra of quadratic forms. We believe that this was probably known even before. The following statements were formulated and proven in \cite{ES}.
\begin{prop}
Let $\overline{\H}^2=\H^2\cup \partial \H^2=\H^2\cup\R\P^1$ and $\omega_1,\omega_2\in \overline{\H}^2$. The quadratics of the form $$s Q_{\omega_1}+t Q_{\omega_2},s \geq 0,t\geq 0, s+t=1$$ parametrize the hyperbolic segment that joins $\omega_1$ and $\omega_2 .~\dagger$
\end{prop}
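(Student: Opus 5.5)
The plan is to identify each quadratic with a point of Minkowski space $\R^{2,1}$, show that this identification is linear and sends $Q_\omega$ to a point on the hyperboloid or its asymptotic cone, and then use the fact that geodesics in the hyperboloid model are cut out by planes through the origin, as in (\ref{geodesic}). Write $Q=aX^2-2bXZ+cZ^2$ and send it to the coordinate vector $(a,b,c)$. Since $\Delta=ac-b^2$, the discriminant is a Lorentzian quadratic form in $(a,b,c)$. After the linear change $x=(c-a)/2$, $y=b$, $z=(a+c)/2$, we get $\Delta=z^2-x^2-y^2$, with the Minkowski pairing $M$ as its polarization.

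First we check that this change of coordinates, applied to the normalized form $Q_\omega/\sqrt{\Delta(Q_\omega)}$, reproduces exactly the isometry $\H^2\to\H$ written above. For $\omega=u+{\bf i}v$ we have $Q_\omega=X^2-2uXZ+(u^2+v^2)Z^2$, so $a=1$, $b=u$, $c=u^2+v^2$ and $\Delta=v^2$. Dividing by $v$ gives
\[
\left(\tfrac{u^2+v^2-1}{2v},\ \tfrac{u}{v},\ \tfrac{1+u^2+v^2}{2v}\right).
\]
This agrees with the stated map up to the sign of the first coordinate, which is a Minkowski isometry; if needed we use $x=(a-c)/2$ instead. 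So the projective class of a positive definite $Q$ corresponds to the ray through a point of $\H$, namely the point $\xi(Q)$. For $\omega\in\R\P^1$ the form $Q_\omega$ has $\Delta=0$ and lies on the light cone, and its ray corresponds to the ideal point $\omega$.

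Next, the convex combinations $sQ_{\omega_1}+tQ_{\omega_2}$ with $s,t\ge 0$, $s+t=1$ form the line segment between two vectors $\mathbf p_1,\mathbf p_2$ in the closed future cone. Every point of this segment is nonzero and future pointing, since the future cone is convex. Except in the degenerate case where both points are ideal and the segment stays on the cone, which would force $\omega_1=\omega_2$, the interior points have $\Delta>0$. Radially projecting the segment to $\H$, via $\mathbf p\mapsto \mathbf p/\|\mathbf p\|$, sends it onto the arc of $\H\cap\operatorname{span}(\mathbf p_1,\mathbf p_2)$ lying between the rays of $\mathbf p_1$ and $\mathbf p_2$. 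By (\ref{geodesic}), planes through the origin meet $\H$ in geodesics, so this arc is exactly the hyperbolic segment, or ray, or full geodesic when endpoints are ideal, joining $\omega_1$ and $\omega_2$. Since the zero map only sees the projective class, the family parametrizes that segment. The map $s\mapsto$ point is monotone, because distinct $s$ give non-proportional vectors; hence it is a bijection onto the segment.

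The main obstacle is the bookkeeping in the ideal-endpoint cases. When $\omega_i\in\R\P^1$ we need the plane through $\mathbf p_1,\mathbf p_2$ to meet $\H$ in the geodesic whose endpoint at infinity is $\omega_i$. This holds because the asymptotic directions of the geodesic $\H\cap\Pi$ are exactly the null rays in $\Pi$. We also need to check the case $\omega=\infty$, that is $Q_\infty=Z^2$ with $(a,b,c)=(0,0,1)$, separately to confirm it matches the ideal point at infinity of vertical geodesics. Both follow from the explicit formulas above by taking limits.
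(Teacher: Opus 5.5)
Your argument is correct. The paper gives no proof of this proposition: the $\dagger$ marks it as quoted from \cite{ES}, so there is no in-text argument to compare against step by step.

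Your route is the Lorentzian one. You identify $Q=aX^2-2bXZ+cZ^2$ with $(a,b,c)$ and observe that $\Delta$ is the Minkowski norm in the coordinates $((a-c)/2,\,b,\,(a+c)/2)$. Then $Q_\omega/\sqrt{\Delta(Q_\omega)}$ is exactly the image of $\omega$ under the paper's isometry $\H^2\to\H$, and the family $sQ_{\omega_1}+tQ_{\omega_2}$ is a chord of the closed future cone whose radial projection is an arc of $\H\cap\operatorname{span}(\mathbf p_1,\mathbf p_2)$.

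A more economical alternative is the same linear fact written in the affine chart $a=1$. For $s+t=1$ and finite $\omega_1,\omega_2$ the form $sQ_{\omega_1}+tQ_{\omega_2}$ is monic, so its zero $\omega$ satisfies $\operatorname{Re}\omega=s\operatorname{Re}\omega_1+t\operatorname{Re}\omega_2$ and $|\omega|^2=s|\omega_1|^2+t|\omega_2|^2$. Every geodesic of $\H^2$ is cut out by an equation affine in $(\operatorname{Re}\omega,|\omega|^2)$: either $|\omega|^2-2c\operatorname{Re}\omega=R^2-c^2$ for the semicircle of center $c\in\R$ and radius $R$, or $\operatorname{Re}\omega=\text{const}$. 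Hence the zero stays on the geodesic through $\omega_1,\omega_2$ and moves monotonically. Finite ideal endpoints are covered automatically, and only $Q_\infty=Z^2$, which gives a vertical ray, needs a separate check. That version uses only the description of geodesics as semicircles and vertical rays and needs no limiting argument at ideal points.

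Your version is more conceptual. It transfers essentially verbatim to convex combinations of several $Q_{\omega_i}$ (the Klein-model picture) and to Hermitian forms in $\R^{3,1}$. However, it leans on two facts you only assert:
\begin{itemize}
\item Equation (\ref{geodesic}) describes only planes through $P=(0,0,1)$. To conclude that every plane through the origin meeting $\H$ cuts out a geodesic, note that $Q\mapsto Q^M$ is linear in $(a,b,c)$ and preserves $\Delta$. It therefore acts on $\R^{2,1}$ by linear Minkowski isometries, transitively on $\H$ and compatibly with the zero map.
\item Positivity of $\Delta$ on the open chord follows from the reversed Cauchy--Schwarz inequality $M(\mathbf p_1,\mathbf p_2)>0$ for non-proportional future causal vectors.
\end{itemize}
With those two points made explicit, and the limit bookkeeping at ideal points you sketched (which does work, since $Q_{\omega'}\to Q_u$ coefficientwise as $\omega'\to u$, and after rescaling by $|\omega'|^{-2}$ as $\omega'\to\infty$), the proof is complete.
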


\noindent This is generalized by induction as follows. 

\begin{prop}\label{convex hull}
Let $\omega_1,\omega_2,...,\omega_n\in \overline{\H}^2$ such that for all $i$, $\omega_i$ is not in the hyperbolic convex hull of $\omega_1,\omega_2,...,\omega_{i-1}$. Then the convex hull of  $\omega_1,\omega_2, \dots ,\omega_n$ parametrizes the linear combinations $\sum_{i=1}^n\lambda_iQ_{\omega_i}$ with $\lambda_i\geq 0$ and $\sum_{i=1}^n\lambda_i=1.~ \dagger$
\end{prop}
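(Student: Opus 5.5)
We argue by induction on $n$, with the previous proposition (the two-point segment statement) as the base case $n=2$; the case $n=1$ is trivial. We work throughout with quadratics up to a positive scalar, so that a form $Q$ with $\Delta\ge 0$ and $a\ge 0$ corresponds to a point $\xi(Q)\in\overline{\H}^2$. The target statement is that the set
\[
S_n=\Bigl\{\xi\Bigl(\sum_{i=1}^n\lambda_iQ_{\omega_i}\Bigr):\ \lambda_i\ge 0,\ \sum_{i=1}^n\lambda_i=1\Bigr\}
\]
equals the hyperbolic convex hull $K_n$ of $\omega_1,\dots,\omega_n$.

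First we check that the map is well defined. Each $Q_{\omega_i}$ is positive semidefinite with leading coefficient $1$, or equals $Z^2$ when $\omega_i=\infty$. A convex combination is therefore positive semidefinite and not identically zero, so it defines a point of $\overline{\H}^2$. It is a boundary point only when all $\omega_i$ with $\lambda_i>0$ coincide at a single ideal point.

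For the inductive step, write a combination with $\lambda_n<1$ as
\[
\sum_{i=1}^n\lambda_iQ_{\omega_i}=(1-\lambda_n)R+\lambda_nQ_{\omega_n},
\qquad
R=\sum_{i<n}\frac{\lambda_i}{1-\lambda_n}\,Q_{\omega_i}.
\]
By induction $\xi(R)$ runs over $K_{n-1}$. The form $R$ is a positive multiple $cQ_{\xi(R)}$ of the normalized quadratic at its zero. So $(1-\lambda_n)R+\lambda_nQ_{\omega_n}$ is proportional to $sQ_{\xi(R)}+tQ_{\omega_n}$ for some $s,t\ge 0$ with $s+t=1$. As $\lambda_n$ ranges over $[0,1)$, the ratio $s/t$ runs over all of $(0,\infty]$.

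By the two-point proposition, these points fill the hyperbolic segment from $\xi(R)$ to $\omega_n$. Hence $S_n$ is the union of the geodesic segments joining $\omega_n$ to the points of $K_{n-1}$, that is, the geodesic cone over $K_{n-1}$ with apex $\omega_n$.

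It remains to show that this cone equals $K_n$.
\begin{itemize}
\item The cone is contained in $K_n$, since $K_n$ is convex and contains both $K_{n-1}$ and $\omega_n$.
\item Conversely, the cone contains all $\omega_i$. It is also geodesically convex: the join of a convex set with a point is convex in hyperbolic space. To see this, pass to the Klein (projective) model, where geodesics are straight chords and hyperbolic convexity coincides with Euclidean convexity. There the statement is the Euclidean fact that $\operatorname{conv}(K\cup\{p\})=\bigcup_{k\in K}[k,p]$ for convex $K$. Ideal points are handled by working in the closed disk.
\end{itemize}
Hence the cone equals $K_n$. The hypothesis that $\omega_i$ is not in the hull of the earlier points is not needed for this set equality. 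It only ensures that the parametrization is non-degenerate, and in any case it is harmless.

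The main obstacle is the normalization bookkeeping. One has to check that replacing $R$ by $Q_{\xi(R)}$ only rescales the weights, so that the whole closed segment is still swept out. The endpoint $\lambda_n=1$ must be included, and the ideal-point cases, where $Q_\omega=Z^2$ or a double real root, must be treated uniformly, so that the two-point proposition applies with endpoints in $\overline{\H}^2$.
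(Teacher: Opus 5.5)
Your proof is correct and follows the route the paper indicates, namely induction from the two-point segment proposition. At each step you adjoin $\omega_n$, see that the new set of zeros is the geodesic cone from $\omega_n$ over the previous hull, and identify that cone with the full hull by convexity in the Klein model. One minor caveat: your closing remark that the hypothesis makes the parametrization non-degenerate is not accurate. For example, $\omega_3$ may lie on the geodesic through $\omega_1,\omega_2$ outside the segment, and then the map is not injective. Since your argument never uses the hypothesis, this does not affect the proof.
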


\

\section{The Hyperbolic 3-space as a parameter space for positive definite quadratic Hermitian Forms}

\

\subsection{The hyperbolic three dimensional space $\H^3$} As a set, $\H^3=\C \times \R^+$. Points of $\H^3$ will be written in the form $z+t{\bf j}$ where $z\in \C$ and $t>0$.  The equation $t=0$ represents the floor $\C$ of $\H^3$. The hyperbolic space $\H^3$  is foliated via horospheres %
\[H_t:=\{z+t{\bf j}:~z\in \C\}\] 
which are centered at $\infty$ and indexed by the height $t$ above $\partial \H^3=\P^1\C$. There is a natural isometrical inclusion map $\H^2\rightarrow \H^3$ via $x+{\bf i}t\rightarrow x+{\bf j}t$. The invariant elements of $\H^3$ under the partial conjugation \[z+{\bf j}t \mapsto \bar z+{\bf j}t\] are precisely the elements of $\H^2$. The geodesics are either semicircles centered on $\C$ and perpendicular to $\C$, or rays $\{z_0+{\bf j}t\}$ perpendicular to $\C$. 

\

\noindent For $\omega=z+t{\bf j}\in \H^3$ and $w+0{\bf j}\in \C$ in the floor, an extended distance function is defined $$d_H(\omega,w):=\ln\frac{|z-w|^2+t^2}{t}.$$ The following proposition and its proof are straightforward generalizations from $\H^2$.

\begin{prop}
Let $\omega$ be the ideal point of the geodesic through $\omega_1,\omega_2$ that is closest to $\omega_1$. Then $$d_H(\omega,\omega_2)=d_H(\omega,\omega_1)+d_H(\omega_1,\omega_2).~\dagger$$ 
\end{prop}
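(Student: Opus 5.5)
We reduce the statement to a normalized configuration using isometries of $\H^3$, and then verify the identity by a direct computation along a vertical geodesic, exactly as in the $\H^2$ case. Recall that the orientation-preserving isometries of $\H^3$ are given by the action of $\Sl_2(\C)$ on $\H^3$. This action extends to the boundary $\P^1\C$ by M\"obius transformations and acts transitively on pairs (boundary point, geodesic ending there). However, the extended distance $d_H(\omega,w)=\ln\frac{|z-w|^2+t^2}{t}$ is \emph{not} invariant under all of $\Sl_2(\C)$. It is a Busemann-type function normalized by the point $\infty$, and under an isometry it changes by an additive constant depending only on the boundary point $w$ and the isometry. Since the identity to be proved compares $d_H(\omega,\omega_2)$ with $d_H(\omega,\omega_1)$ for the \emph{same} ideal point $\omega$, any such additive constant cancels. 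So it suffices to prove the identity after moving the configuration.

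For a cleaner argument, we avoid the transformation rule altogether. Translations $z\mapsto z+c$ preserve the formula, and so do rotations $z\mapsto e^{i\theta}z$. We therefore split into two cases.

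\emph{Case 1: the geodesic through $\omega_1,\omega_2$ is vertical.} Then $\omega_1=z_0+t_1{\bf j}$ and $\omega_2=z_0+t_2{\bf j}$, and the ideal point closest to $\omega_1$ is $w=z_0$ with $t_1<t_2$. Here
\[
d_H(\omega,\omega_k)=\ln\frac{t_k^2}{t_k}=\ln t_k ,
\]
and $d_H(\omega_1,\omega_2)=\ln(t_2/t_1)$ from the vertical-distance formula, which holds in $\H^3$ because the distance formula $\cosh d=1+\frac{|z-w|^2+(t-s)^2}{2ts}$ restricts to it. The identity follows immediately.

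\emph{Case 2: the geodesic is a semicircle.} It lies in a vertical half-plane over a Euclidean line $L\subset\C$. After a translation and a rotation of $\C$ we may assume $L=\R$. The whole configuration then lies in the totally geodesic copy $\H^2\subset\H^3$ given by the inclusion $x+{\bf i}t\mapsto x+{\bf j}t$. For points on this slice and real $w$, the extended distance $\ln\frac{(x-w)^2+t^2}{t}$ coincides with the Cremona--Stoll boundary distance in $\H^2$. The hyperbolic distance also restricts isometrically. Hence the claim is exactly the earlier $\H^2$ additivity proposition.

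The main point needing care is the reduction in Case 2. One has to check that the vertical half-plane over $\R$ really is an isometric copy of $\H^2$, that the formula for $d_H$ in $\H^3$ restricts to the $\H^2$ formula, and that translations and rotations of $\C$ preserve both $d_H(\cdot,\cdot)$ and the extended distance. All three are immediate from the explicit formulas. The ideal point "closest to $\omega_1$" is also preserved, since these maps are isometries fixing $\infty$.
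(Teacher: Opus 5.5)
Your proof is correct and takes essentially the paper's approach: the paper only asserts that this statement and its proof are straightforward generalizations of the $\H^2$ additivity proposition. Your argument makes that precise, using translations and rotations of $\C$ (which preserve both $d_H$ and the extended distance exactly) to move the geodesic into the vertical copy of $\H^2$ over $\R$. Restricting the vertical case to $t_1<t_2$ is right, since otherwise the nearer ideal point is $\infty$, where the extended distance is not defined.
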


\noindent There is a right action of $\Sl_2(\C)$ on $\H^3$ which when restricted to the floor $t=0$ yields the standard $\Sl_2(\C)$-action on $\C\P^1$.

\subsection{$\H^3$ and  positive definite Hermitian quadratic forms} Let $$H(X,Z)=a|X|^2-bX\Bar Z-\bar b\bar XZ+c|Z|^2, a,c\in \R$$ be a Hermitian quadratic form with homogeneous variables $[X,Z] \in \P^1\C$. Notice that the values of $H(X,Z)$ are always real. Let $\Delta=ac-|b|^2$ be its discriminant. Then $$H(X,Z)=a[X-(\bar b/a)Z]^2+(\Delta/a)Z^2,$$  If both  $\Delta >0,a>0$ then $H(X,Z)>0$ for all $(X,Z)$. Such a form is called {\bf positive definite.}  The set of all positive definite Hermitian quadratic forms is denoted here by $V^+_{2,\C}$. There is an $\Sl_2(\C)$ action on $V^+_{2,\C}$ similar to the real case. The natural $\Sl_2(\R)$ equivariant inclusion $\psi: V^+_{2,\R}\rightarrow V^+_{2,\C}$ via $$\psi(aX^2-2bXZ+cZ^2)=a|X|^2-bX\bar Z-\bar b\bar XZ+c|Z|^2,$$ gives rise to an extension of the zero map.

\begin{defn}
The zero map $\z_{\J}: V^+_{2,\C}\rightarrow \H_3$ is defined via
\begin{equation} \label{eq: hermitian zero map}
\z(a|X|^2-bX\bar Z-\bar b\bar XZ+c|Z|^2)=\frac{\bar b}{a}+{\bf j}\frac{\sqrt \Delta}{a}.~\dagger
\end{equation}
\end{defn}

\noindent This map $\z_{\J}$ is $\Sl_2(\C)$-equivariant.

\
 
\noindent The hyperbolic space $\H^3$ parametrizes (up to a constant factor) positive definite ($\Delta>0,a>0$) Hermitian quadratic forms via the inverse map $$H_{\omega}=\z^{-1}(\omega)=\z^{-1}(z+{\bf j}t)= |X|^2-\bar{z}\bar XZ-zX\bar Z+(|z|^2+t^2)|Z|^2,$$ while the boundary $\C\P^1=\C\cup{\infty}$ of $\H^3$ parametrizes the decomposable ($\Delta=0$) Hermitian forms \[H_{\beta}=(X-\bar \beta Z)(\bar X-\beta \bar Z)~\text{for}~\beta\in \C,~\text{and}~H_{\infty}=|Z|^2,\] Just as in the case of $\H_2$, we have the following proposition:

\begin{prop} \label{convex hull H3}
Let $\overline{\H}^3=\H^3\cup \partial \H^3=\H^3\cup \C\P^1$. The hyperbolic convex hull of $\omega_1,\omega_2,...,\omega_n\in \overline{\H}^3$ parametrizes Hermitian forms $\sum_{i=1}^n \lambda_i H_{\omega_i}$ with $\lambda_i\geq 0$ for $i=1,2,...,n$ and $\sum_{i=1}^n \lambda_i=1.~\dagger$
\end{prop}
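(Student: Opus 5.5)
Following the $\H^2$ case (Proposition \ref{convex hull}), we translate the statement into linear algebra on the real vector space $W$ of Hermitian forms. Since $a,c\in\R$ and $b\in\C$, $W$ is $4$-dimensional, and the discriminant $\Delta=ac-|b|^2$ is a quadratic form on $W$ of signature $(1,3)$. With respect to it, the positive definite forms are one sheet of the interior of the cone $\{\Delta>0\}$, and the decomposable forms $H_\beta$ are the boundary rays $\{\Delta=0\}$. Projectivizing this cone gives the Klein (projective) model of $\H^3$, exactly as the Minkowski space $\R^{2,1}$ and the hyperboloid $\H$ do in Section 2.

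\textbf{Step 1: projectivization.} We check that the map $\omega\mapsto [H_\omega]$ identifies $\overline{\H}^3$ with the closed projectivized positive cone. Write $H_\omega$ in the coordinates $(a,\operatorname{Re}b,\operatorname{Im}b,c)$ and apply the linear change $x_0=(a+c)/2$, $x_3=(a-c)/2$, $x_1=\operatorname{Re} b$, $x_2=\operatorname{Im}b$. Then $\Delta=x_0^2-x_1^2-x_2^2-x_3^2$, and normalizing $H_\omega/\sqrt{\Delta}$ gives the usual hyperboloid embedding of the upper half-space. Concretely, for $\omega=z+t{\bf j}$ the normalized form is $\frac1t(|X|^2-\dots+(|z|^2+t^2)|Z|^2)$, which is the $3$-dimensional analogue of the isometry $\H^2\to\H$ displayed earlier. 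In this picture hyperbolic geodesics are the intersections of the hyperboloid with $2$-planes through the origin. The boundary points $H_\beta$ and $H_\infty$ are the null rays, and one verifies directly that $\Delta(H_\beta)=0$.

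\textbf{Step 2: segments, the case $n=2$.} For $\omega_1,\omega_2\in\overline{\H}^3$, the cone $\{sH_{\omega_1}+tH_{\omega_2}: s,t\ge 0\}$ lies in the $2$-plane spanned by $H_{\omega_1}$ and $H_{\omega_2}$. That plane meets the hyperboloid in the geodesic through $\omega_1,\omega_2$. Positive combinations project onto exactly the arc between them, because this is the projective image of a segment in the Klein model. Rescaling so that $s+t=1$ does not change the projective point, so the normalization is harmless.

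\textbf{Step 3: general $n$.} In the Klein model, hyperbolic convexity coincides with Euclidean convexity in an affine chart, for example $x_0=1$. The hyperbolic convex hull of $\omega_1,\dots,\omega_n$ is therefore the radial projection of the convex cone $\{\sum\lambda_iH_{\omega_i}:\lambda_i\ge 0\}$, which proves the statement. Alternatively, one can argue by induction on $n$ using Step 2. Write $\sum_{i=1}^n\lambda_iH_{\omega_i}=(1-\lambda_n)\,G+\lambda_nH_{\omega_n}$, where $G$ is a normalized combination of the first $n-1$ forms. By induction $G$ is a point of the hull of $\omega_1,\dots,\omega_{n-1}$, and by Step 2 the whole expression ranges over the union of segments from that hull to $\omega_n$. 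This union is the hull of all $n$ points, because hyperbolic convex hulls are built by iterated joins.

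\textbf{Main obstacle.} The delicate points are the sign and convention issues. The form $H_\beta=(X-\bar\beta Z)(\bar X-\beta\bar Z)$ must correspond to the boundary point $\beta$ and not to $\bar\beta$, consistent with the zero map \eqref{eq: hermitian zero map}. Ideal points need care because they have no normalized representative; for them we work in the Klein chart rather than on the hyperboloid. Finally, every combination with at least two distinct points must land in the open positive cone, which follows since the positive cone is convex and its boundary contains no segments other than rays through the origin. We would first verify the coordinate identification in Step 1 carefully, since everything else then reduces to standard Klein-model convexity.
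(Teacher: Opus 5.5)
Your proof is correct, but your main route differs from the paper's. The paper gives no separate argument here. It asserts the statement ``just as in the case of $\H^2$,'' where the convex-hull statement is deduced by induction from the two-point (segment) case proved in \cite{ES}; that is essentially your alternative argument at the end of Step 3.

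Your main argument instead exposes the linear structure: $(W,\Delta)$ is Minkowski space $\R^{3,1}$. The one computation that makes Step 1 rigorous is the polarization $B(H,H')=\tfrac12(ac'+a'c)-\operatorname{Re}(b\bar b')$. It gives $B(H_{\omega_1},H_{\omega_2})/(t_1t_2)=1+\frac{|z_1-z_2|^2+(t_1-t_2)^2}{2t_1t_2}=\cosh d_H(\omega_1,\omega_2)$, so $\omega\mapsto H_\omega/t$ is the hyperboloid isometry. Every $H_{\omega_i}$, including $H_\infty=|Z|^2$, has $x_0>0$. Hence the simplex $\sum\lambda_i=1$ maps by radial projection onto the Euclidean convex hull in the chart $x_0=1$, i.e.\ the Klein-model hull.

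What your route buys:
\begin{itemize}
\item it handles all $n$ at once;
\item it treats interior and ideal points uniformly;
\item it avoids needing the join property of hyperbolic hulls;
\item it shows that no ordering hypothesis like the one in Proposition~\ref{convex hull} is needed.
\end{itemize}
The inductive route, by contrast, needs the segment case as a separate input.

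One correction to your ``main obstacle'' paragraph. What you need is that $H_{\beta+t{\bf j}}\to H_\beta$ as $t\to0$, i.e.\ that $\omega\mapsto[H_\omega]$ is continuous on $\overline{\H}^3$. You do not need agreement with \eqref{eq: hermitian zero map}. Taken literally, that formula returns $\bar z+{\bf j}t$ on $H_\omega$ (where $b=z$). Conjugation is an isometry of $\H^3$, however, so this does not affect the statement.
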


\noindent The equivariant connection between the geometry of hyperbolic spaces and the algebra of positive definite forms, which extends to the boundary as well, can be expressed in the following equivariant commutative diagram:

\[
\xymatrix{
 V^+_{2,\R}   \ar@{->}[d]  \ar@{->}[r]^\z      & \H_2  \ar@{->}[d]^{}  \\
 V^+_{2,\C}        \ar@{->}[r]^\z                   & \H_3 \\
}
\]

\

\section{Reduction of binary forms via Julia's Zero Map} 

\

\noindent In this section we summarize the reduction of binary forms via Julia's zero map obtained in \cite{Julia} and \cite{CS}. We will focus on the geometric features of the theory which are of special interest to us.

\

\noindent Let $V_{n,\C}$ denote the space of complex binary forms of degree $n$. If $F\in V_{n,\C}$ then
 \[ F(X,Z)=a_0\prod_{i=1}^n(X-\alpha_iZ) \]
for some complex numbers $\alpha_j$ and $a_0\neq 0$. The roots $\alpha_i,~i=1,2,...,n$ of $F(X,Z)$ are placed in the floor $t=0$ of $\H^3$. We will employ the extended distance function between a point $w=z+{\bf j}t\in \H^3$ and $\alpha \in \C$ in the boundary floor: $$d_H(w,\alpha)=\ln \frac {|z-\alpha|^2+t^2}{t}.$$

\noindent {\bf Remark} Recall that a  binary form of degree $N$ is called {\bf stable} if none of its root has multiplicity at least $N/2$. In what follows, whenever we invoke Cremona-Stoll Julia covariant and its uniqueness properties we assume that $F$ is stable.

\begin{defn}
(Julia and Proposition $5.3$ in \cite{CS}) Let $F(X,Z)$ be a stable binary form. There is a unique point $\z_{\J}(F)\in  \H^3$ that minimizes the sum of distances $$\displaystyle{\sum_{i=1}^n d_H(w,\alpha_i)}$$
The map $\z_{\J}: V_{n,\C}\rightarrow \H_3$ is called {\bf Julia's zero map}. The quadratic form $H_{\z_{\J}(F)}\in V_{2,\C}$ is called the {\bf Julia quadratic} of $F.~\dagger$
\end{defn}

\noindent If $\z_{\J}(F)$ is in the fundamental domain $\F$ of $\Sl_2(\Z)$  the form $F(X,Z)$ is called {\bf reduced}. If not, let $M\in \Sl_2(\Z)$ such that  $\z_{\J}(F)\cdot M\in \F$. The form $F(X,Z)$ {\bf reduces} to $F^{M}(X,Z)$. The reduced form is expected to have smaller coefficients. 

\

\noindent Another equivalent, geometric description of the zero map is given by the following statement:

\begin{cor}\label{tangent vectors}
(Corollary $5.4$ in \cite{CS}) Let $F$ be a stable binary form. The zero map value $\z_{\J}(F)$ is the unique point where the unit tangent vectors along the geodesics to the roots $\alpha_i$ add up to zero.~$\dagger$
\end{cor}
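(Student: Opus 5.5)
The plan is to view $\z_{\J}(F)$ as the unique minimizer of the function
\[
f(w)=\sum_{i=1}^n d_H(w,\alpha_i),\qquad d_H(w,\alpha)=\ln\frac{|z-\alpha|^2+t^2}{t},
\]
and to show that the gradient of each summand at $w$, taken with respect to the hyperbolic metric, is minus the unit tangent vector at $w$ pointing along the geodesic toward $\alpha_i$. Then $\nabla f(w)=-\sum_i {\bf u}_i(w)$. A critical point of $f$ is therefore exactly a point where the unit tangent vectors to the roots sum to zero. Uniqueness of the minimizer, granted for stable $F$ in the Definition of Julia's zero, combined with convexity of $f$, gives the claimed characterization.

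The key computation is the gradient of the Busemann-type function $w\mapsto d_H(w,\alpha)$. By $\Sl_2(\C)$-equivariance we may move $\alpha$ to a convenient position. I would send $\alpha$ to $\infty$ by a Möbius map. Since each summand changes only by an additive constant under such a map (up to the normalization built into the extended distance), its gradient is unchanged. At $\alpha=\infty$ the function becomes $-\ln t+\text{const}$, and the hyperbolic gradient of $-\ln t$ is $-t\,\partial_t$. This vector has hyperbolic length $1$ and points straight down, away from $\infty$. So $\nabla d_H(\cdot,\alpha)$ is the unit vector pointing away from $\alpha$ along the vertical geodesic.

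Alternatively, I would avoid the change of coordinates and use the additive Proposition on the extended distance directly. For $w'$ on the geodesic from $w$ toward $\alpha$ at distance $s$, it gives $d_H(w,\alpha)=s+d_H(w',\alpha)$. Hence the directional derivative toward $\alpha$ equals $-1$. Since $|\nabla d_H|$ is at most $1$ (this function is $1$-Lipschitz, again by the additive property and the triangle inequality), the gradient must be exactly $-{\bf u}_\alpha$.

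Next I need convexity, so that a critical point is a minimizer and, conversely, the minimizer is critical. Each $d_H(\cdot,\alpha)$ is convex along geodesics, because horofunctions are convex in $\H^3$. Their sum is strictly convex once stability rules out all roots lying at the endpoints of a single geodesic with large multiplicity. Moreover, $f\to\infty$ at the boundary for stable $F$, since fewer than half the roots can sit near any ideal point. Hence the minimizer exists, is interior, and is the unique zero of $\nabla f=-\sum_i{\bf u}_i$.

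I expect the main obstacle to be the convexity and properness step: showing the sum is strictly convex and coercive precisely under stability. I would rely on the fact, stated in the Definition, that the minimizer is unique. Uniqueness of the critical point then follows from geodesic convexity, since any critical point of a convex function is a global minimizer.
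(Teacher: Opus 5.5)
Your proposal is correct. The paper gives no proof of its own here and simply imports the statement from Cremona--Stoll. Your argument is the standard derivation of that corollary from the minimization characterization in the Definition. It runs as follows: the extended distance $d_H(\cdot,\alpha)$ is a Busemann function whose hyperbolic gradient is the unit vector pointing away from $\alpha$. Hence the critical points of $\sum_i d_H(\cdot,\alpha_i)$ are exactly the points where the unit tangent vectors toward the roots cancel. Geodesic convexity together with the uniqueness of the minimizer then identifies that point as $\z_{\J}(F)$.

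Of your two gradient computations, the one that sends $\alpha$ to $\infty$ is the airtight one. The $1$-Lipschitz claim in your alternative route needs more than the additive property and the triangle inequality, such as the explicit formula or a limiting argument. You do not need that route, though.
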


\

\section{Julia's zero map for totally complex real binary forms}

\

\noindent  We now specialize Julia's construction to real binary forms with no real
roots. In this case the roots occur in conjugate pairs, and Julia's zero
lies in the natural copy of $\H^2$ inside $\H^3$. This leads to an
equilibrium condition entirely in $\H^2$, which will be useful for comparing
Julia's zero with the hyperbolic zero.

\noindent  Let
\[
F(X,Z)
=
a_0\prod_{i=1}^n
(X-\alpha_iZ)(X-\overline{\alpha_i}Z),
\qquad
\alpha_i\in\H^2,~a_0\in \R.
\]
\noindent  Since $F$ has real coefficients, $\z_{\J}(F)$ is fixed by conjugation and
therefore
\[
\z_{\J}(F)\in\H^2\subset\H^3.
\]
For $P\in\H^2$, let
\[
t_i=d_H(P,\alpha_i),
\]
and let ${\bf v}_i\in T_P\H^2$ be the unit tangent vector at $P$ pointing toward
$\alpha_i$.

\begin{prop}
\noindent  Let $F(X,Z)$ be a real binary form with no real roots, and let
$\alpha_1,\ldots,\alpha_n\in\H^2$ be its upper-half-plane roots. Then
\[
P=\z_{\J}(F)
\]
\noindent  if and only if
\[
\sum_{i=1}^n \tanh(t_i)\,{\bf v}_i=0,
\]
\noindent  where $t_i=d_H(P,\alpha_i)$ and ${\bf v}_i$ is the unit tangent vector at $P$
pointing toward $\alpha_i.~\dagger$
\end{prop}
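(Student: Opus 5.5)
The plan is to reduce the statement to Corollary \ref{tangent vectors} (Corollary 5.4 of \cite{CS}). That corollary characterizes $\z_{\J}(F)\in\H^3$ as the unique point where the unit tangent vectors along the geodesics to the $2n$ boundary roots $\alpha_1,\overline{\alpha_1},\ldots,\alpha_n,\overline{\alpha_n}\in\C\subset\partial\H^3$ sum to zero. Here each $\alpha_i$ is regarded as a point of the floor $\C$, not as a point of $\H^2$. We assume, as the paper does, that $F$ is stable.

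First, as already noted, $\z_{\J}(F)$ lies in $\H^2\subset\H^3$. Conversely, any $P$ satisfying the equilibrium condition will be shown to lie in $\H^2$, so it suffices to work with $P\in\H^2$. The whole statement then reduces to one local identity: for $P\in\H^2$ and $\alpha\in\H^2$,
\[
{\bf u}_P(\alpha)+{\bf u}_P(\overline{\alpha}) \;=\; 2\tanh\bigl(d_H(P,\alpha)\bigr)\,{\bf v}_P(\alpha).
\]
Here ${\bf u}_P(\beta)\in T_P\H^3$ is the unit tangent at $P$ toward the boundary point $\beta$, and ${\bf v}_P(\alpha)\in T_P\H^2$ is the unit tangent at $P$ toward the interior point $\alpha$ of $\H^2$. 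Summing this identity over $i$ gives $\sum_i({\bf u}_P(\alpha_i)+{\bf u}_P(\overline{\alpha_i}))=2\sum_i\tanh(t_i){\bf v}_i$. Hence the Cremona--Stoll equilibrium holds at $P$ if and only if $\sum_i\tanh(t_i){\bf v}_i=0$, and the uniqueness in Corollary \ref{tangent vectors} gives the equivalence.

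To prove the identity, both sides are $\Sl_2(\R)$-equivariant: $\Sl_2(\R)$ acts isometrically on $\H^3$, preserves $\H^2$, and commutes with conjugation. So we may normalize $P={\bf j}$, i.e.\ $P={\bf i}\in\H^2$. At $P={\bf j}$, the unit tangent toward a floor point $\beta$ is given by inverse stereographic projection:
\[
{\bf u}(\beta)=\Bigl(\tfrac{2\beta}{1+|\beta|^2},\ \tfrac{|\beta|^2-1}{1+|\beta|^2}\Bigr)\in\C\times\R,
\]
up to the sign convention on the vertical component, which I will fix by checking $\beta=0,\infty$. Writing $\alpha=a+{\bf i}b$, the conjugate pair sums to
\[
\Bigl(\tfrac{4a}{1+|\alpha|^2},\ \tfrac{2(|\alpha|^2-1)}{1+|\alpha|^2}\Bigr).
\]
This already lies in the plane spanned by the real horizontal and vertical directions, which is $T_P\H^2$; this confirms that the conjugation symmetry kills the imaginary horizontal component.

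On the other side, I use the hyperboloid model and the geodesic parametrization \eqref{geodesic}. The isometry sends $\alpha$ to
\[
{\bf x}=\Bigl(\tfrac{1-|\alpha|^2}{2b},\tfrac{a}{b},\tfrac{1+|\alpha|^2}{2b}\Bigr),
\]
so $\cosh t=\frac{1+|\alpha|^2}{2b}$ and $\sinh t\,{\bf v}=\bigl(\frac{1-|\alpha|^2}{2b},\frac ab\bigr)$. Dividing by $\cosh t$ gives
\[
\tanh t\,{\bf v}=\Bigl(\tfrac{1-|\alpha|^2}{1+|\alpha|^2},\ \tfrac{2a}{1+|\alpha|^2}\Bigr).
\]
This is exactly half the conjugate-pair sum computed above, once the two tangent planes are identified.

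The main obstacle is exactly this identification of $T_{\bf i}$ of the hyperboloid with the vertical plane of $T_{\bf j}\H^3$: getting the orientation and sign of each axis right, in particular matching "$1-|\alpha|^2$" with the vertical direction. I would pin it down by testing $\alpha={\bf i}\lambda$ with $\lambda>1$, which must point straight up in both pictures, and $\alpha$ near $\pm1$, which must point horizontally. Once the identity is established, the rest is bookkeeping. The statement that an equilibrium point $P\in\H^3$ must lie in $\H^2$ follows from uniqueness together with conjugation invariance of the root set.
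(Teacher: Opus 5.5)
Your proposal is correct and follows the paper's proof essentially step by step. Both arguments normalize $P={\bf i}$ and compute the unit tangents in $\H^3$ toward $\alpha_i$ and $\overline{\alpha_i}$; you do this via inverse stereographic projection, the paper by parametrizing the semicircle, with the same formula. Both then compute $\tanh(t_i){\bf v}_i$ in the hyperboloid model, identify the tangent planes by $(a,b,0)\mapsto(b,0,-a)$, and invoke the uniqueness in Corollary~\ref{tangent vectors}. The paper additionally disposes of the non-stable case (all upper-half-plane roots equal to some $\alpha$, with the convention $\z_{\J}(F)=\alpha$) in one line, which you explicitly set aside.
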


\begin{proof}
\noindent  By $\Sl_2(\R)$-equivariance, we may assume that $P=\bf i$. In the upper
half-space model of $\H^3$, we identify $P$ with $(0,0,1)$. Let $\alpha_i=x_i+iy_i,~ y_i>0.$
\noindent  As a point of the boundary of $\H^3$, $\alpha_i$ is represented by
$(x_i,y_i,0)$, while its conjugate $\overline{\alpha_i}$ is represented
by $(x_i,-y_i,0)$.

\

\noindent  Let ${\bf u}(P,\alpha_i)$ and ${\bf u}(P,\overline{\alpha_i})$ denote the unit tangent
vectors at $P$ pointing toward $\alpha_i$ and $\overline{\alpha_i}$,
respectively. We compute ${\bf u}(P,\alpha_i)$ by making the geodesic from $P$
to $\alpha_i$ explicit. Set
\[
r_i=|\alpha_i|=\sqrt{x_i^2+y_i^2}.
\]
\noindent  The geodesic from $P$ to $\alpha_i$ lies in the vertical half-plane
\[
\Pi_i=
\left\{
\left(\rho\frac{x_i}{r_i},
      \rho\frac{y_i}{r_i},t\right):
\rho\in\R,\ t>0
\right\}.
\]
In the coordinates $(\rho,t)$ on $\Pi_i$, the point $P$ is $(0,1)$ and
$\alpha_i$ is the ideal boundary point $\rho=r_i,~t=0$. The geodesic joining
them is the semicircle
\[
\rho(\theta)=c_i+R_i\cos\theta,\qquad
t(\theta)=R_i\sin\theta,
\]
where
\[
c_i=\frac{r_i^2-1}{2r_i},
\qquad
R_i=\frac{r_i^2+1}{2r_i}.
\]
The point $P$ corresponds to the angle $\theta_0$ satisfying
\[
\cos\theta_0=\frac{1-r_i^2}{1+r_i^2},
\qquad
\sin\theta_0=\frac{2r_i}{1+r_i^2}.
\]
Since
\[
ds=-\frac{d\theta}{\sin\theta}
\]
along the geodesic oriented toward the ideal endpoint $r_i$, differentiation
at $P$ gives
\[
\left(\frac{d\rho}{ds},\frac{dt}{ds}\right)\bigg|_P
=
\left(
\frac{2r_i}{1+r_i^2},
\frac{r_i^2-1}{1+r_i^2}
\right).
\]
\noindent  Rotating the horizontal component back in the direction
$(x_i,y_i)/r_i$ gives
\[
{\bf u}(P,\alpha_i)
=
\frac{1}{1+x_i^2+y_i^2}
\left(
2x_i,\,
2y_i,\,
x_i^2+y_i^2-1
\right).
\]
Replacing $y_i$ by $-y_i$ gives
\[
{\bf u}(P,\overline{\alpha_i})
=
\frac{1}{1+x_i^2+y_i^2}
\left(
2x_i,\,
-2y_i,\,
x_i^2+y_i^2-1
\right).
\]
Hence
\[
{\bf u}(P,\alpha_i)+{\bf u}(P,\overline{\alpha_i})
=
\left(
\frac{4x_i}{1+x_i^2+y_i^2},
\,0,\,
\frac{2(x_i^2+y_i^2-1)}
     {1+x_i^2+y_i^2}
\right).
\]

\noindent  We now express the same tangent vector using the hyperboloid model of
$\H^2$. The point $\alpha_i=x_i+iy_i$ is represented by
\[
\mathbf{x}_i
=
\left(
\frac{1-x_i^2-y_i^2}{2y_i},
\frac{x_i}{y_i},
\frac{1+x_i^2+y_i^2}{2y_i}
\right).
\]
If $t_i=d_H(P,\alpha_i)$, then
\[
\cosh t_i
=
\frac{1+x_i^2+y_i^2}{2y_i}.
\]
From the equation (\ref{geodesic})
\[
\mathbf{x}_i
=
\cosh(t_i)P+\sinh(t_i){\bf v}_i,
\]
we obtain
\[
\tanh(t_i){\bf v}_i
=
\left(
\frac{1-x_i^2-y_i^2}
     {1+x_i^2+y_i^2},
\frac{2x_i}
     {1+x_i^2+y_i^2},
0
\right).
\]

\noindent  The last two formulas use different coordinate realizations of the same
tangent plane. At the base point $P$, let
\[
\iota:
T_P\H^2_{\mathrm{hyp}}
\longrightarrow
T_P\H^2_{\mathrm{uhs}}
\]
\noindent  denote the natural identification between the tangent space in the
hyperboloid model and the tangent space of the vertical copy of $\H^2$
in the upper half-space model. In the coordinates used above,
\[
\iota(a,b,0)=(b,0,-a).
\]
Therefore
\[
\begin{aligned}
\iota\bigl(2\tanh(t_i){\bf v}_i\bigr)
&=
\left(
\frac{4x_i}{1+x_i^2+y_i^2},
\,0,\,
\frac{2(x_i^2+y_i^2-1)}
     {1+x_i^2+y_i^2}
\right)\\
&=
{\bf u}(P,\alpha_i)+{\bf u}(P,\overline{\alpha_i}).
\end{aligned}
\]
\noindent Thus, under the natural identification of the two tangent-space models,
\[
{\bf u}(P,\alpha_i)+{\bf u}(P,\overline{\alpha_i})
=
2\tanh(t_i){\bf v}_i.
\]
Summing over the conjugate pairs gives
\[
\sum_{i=1}^n
\bigl(
{\bf u}(P,\alpha_i)+{\bf u}(P,\overline{\alpha_i})
\bigr)
=
2\sum_{i=1}^n\tanh(t_i){\bf v}_i.
\]

\noindent  Assume first that $F$ is stable.  By Corollary \ref{tangent vectors}, $\z_{\J}(F)$ is the unique point of $\H^3$ at which the
unit tangent vectors toward all the roots of $F$ sum to zero. The identity
above shows that, on the invariant copy of $\H^2$, this condition is
equivalent to
\[
\sum_{i=1}^n\tanh(t_i){\bf v}_i=0.
\]
Therefore
\[
P=\z_{\J}(F)
\quad\Longleftrightarrow\quad
\sum_{i=1}^n\tanh(t_i){\bf v}_i=0.
\]

\noindent If $F$ is not stable, then in the present setting all upper half-plane roots coincide, say $\alpha_1=\alpha_2=...=\alpha_n=\alpha$. Under the natural extension $\z_{\J}(F)=\alpha$, the stated equilibrium condition is immediate.
\end{proof}

\begin{remark}
\noindent  The proposition gives a useful geometric interpretation of Julia's zero for
real forms with no real roots. Each conjugate pair
$\alpha_i,\overline{\alpha_i}$ contributes a vector tangent to $\H^2$ whose
magnitude is $2\tanh(t_i)$. This is because the components of $ {\bf u}_i(P,\alpha_i), {\bf u}_i(P,\overline{\alpha_i})$ that are orthogonal to $T_{P}\H^2$ cancel out, while the tangential components are both equal to $\tanh t_i$. Thus, after pairing conjugate roots, the
Cremona--Stoll equilibrium condition in $\H^3$ becomes the weighted
equilibrium
\[
\sum_{i=1}^n\tanh(t_i){\bf v}_i=0
\]
in $\H^2$.

\

\noindent  The fact that $\tanh(t)$ is bounded will be important later. In the
resulting $\H^2$-equilibrium, a root moving far from $P$ contributes a
vector whose magnitude approaches $1$, rather than growing without bound.
This is the geometric source of the bounded-influence behavior of Julia's
zero proved in Section~9.~$\dagger$
\end{remark}

\

\section{The reduction of real forms via the hyperbolic center of mass}

\

\noindent In this section we introduce an alternative zero map for binary forms with real coefficients and no real roots. It is based on the notion of {\bf hyperbolic center of mass} in hyperbolic spaces. We focus on $\H_2$ which is the case of interest for us, but the general case is straightforward. Our treatment follows closely that of \cite{Gal}.

\

\noindent Let ${\mathcal R}_{2n,0}\subset V_{2n,\R}$ denote binary forms of degree $2n$ with real coefficients and no real roots. Every $F(X,Z)\in {\mathcal R}_{2n,0}$ can be factored 
\[F(X,Z)=a_0\prod_{i=1}^n Q_{\alpha_i}(X,Z)\] where 
\[Q_{\alpha_i}(X,Z)=(X-\alpha_iZ)(X-\overline{\alpha_i}Z)\] with ${\alpha}_i\in \H_2$ and $a_0\in \R$. The upper half-plane points $\alpha_1,...\alpha_n$ have a hyperbolic center of mass \[\mathcal C_{F}:={\mathcal C}_{\H}(\alpha_1,\alpha_2,...,\alpha_n).\] 

\
\begin{defn}
The map $\z_{\H}:{\mathcal R}_{2n,0}\rightarrow \H_2,~\z_{\H}(F)={\mathcal C}_{F}$ is called \break \underline{the hyperbolic zero map}. The quadratic form
\[Q_{\mathcal{C}_{F}}(X,Z)=(X-{\mathcal C}Z)(X-\overline{\mathcal C}Z)\] corresponding to $\mathcal C_{F}$
is called \underline{the hyperbolic quadratic polynomial} of $F$.~$\dagger$
\end{defn}

\noindent The reduction theory based on the hyperbolic center of mass proceeds as before. Let $F(X,Z)$ be a real binary form with no real roots. If ${\mathcal C}_{F}\in \F$ then $F$ is reduced. Otherwise, let $M\in \Sl_2(\Z)$ such that ${\mathcal C}_F\cdot M \in \F$. The form $F$ reduces to $F^M(X,Z)$.

\

\section{Comparing the Two Zero Maps}

\

\noindent Let $F(X,Z)$ be a binary form with real coefficients and no real roots. Let $\alpha_1,\alpha_2,...,\alpha_n$ be the upper half-plane complex roots of $F$. Recall

\begin{enumerate}

\item \noindent Julia's zero map of $F$ is the point  $\z_{\J}(F)\in \H^2$ where $$ \sum_{i=1}^n (\tanh t_i) {\bf v}_i=0$$ 

\item The hyperbolic zero map of $F$ is the point $\z_{\H}(F)\in \H^2$ where   $$\sum_{i=1}^n (\sinh t_i) {\bf v}_i=0.$$

\end{enumerate}

\noindent In both equations $t_i$ is the hyperbolic distance from the zero map value to $\alpha_i$ and ${\bf v}_i$ is the unit vector from the zero map value to the point $\alpha_i$. The two zero maps are expressed as two hyperbolic equilibrium laws with different radial weights.

\

\noindent The coincidence problem reduces to when a point $P$ satisfies simultaneously $$ \sum_{i=1}^n (\tanh t_i){\bf v}_i=\sum_{i=1}^n (\sinh t_i){\bf v}_i=0.$$ 

\begin{prop} (Symmetry Principle) Let $F$ be a totally complex binary form with real coefficients. Let $S=\{\alpha_1,...,\alpha_n\}\subset {\H}^2$ be the multiset of upper half-plane roots of $F$. Let $G\leq \Sl_2(\R)$ preserve $S$ as a multiset, i.e. $S\cdot M=S$ for every $M\in G$. Assume also that the induced action of $G$ on ${\H}^2$ has a unique common fixed point $P$ meaning $P\cdot M=P$ for every $M\in G$. Then $\z_{\J}(F)=\z_{\H}(F)=P$.

\begin{proof} Let  $\z$ denote either zero map value of $F$ and let $M\in G$. Since $\z$ is equivariant, then  $$\z(S\cdot M)=\z(S)\cdot M$$ But $S\cdot M=S$ hence $$\z(S)=\z(S)\cdot M$$ It follows that $
\z(S)$ is fixed by every $M\in G$. Since the induced action of $G$ has a unique common fixed point in $\H^2$ we obtain $\z_{\J}(S)=P$ and $\z_{\H}(S)=P$, so $\z_{\J}(F)=\z_{\H}(F)$.

\end{proof}

\begin{cor} If the upper half-plane roots are stable under hyperbolic half-turn about $P$ then $\z_{\J}(S)=\z_{\H}(S)=P$.~$\dagger$
\end{cor}

\

\begin{cor}\label{rotation} Suppose there is $M\in \Sl_2(\R)$ such that its induced action in $\H^2$ is an order three rotational symmetry about the point $P\in \H^2$ and $S\cdot M=S$. Then $\z_{\J}(S)=\z_{\H}(S)=P$.~$\dagger$
\end{cor}

\

\noindent These two corollaries will account for the converse statements in the low-degree coincidence results below.

\end{prop}

\

\section{Coincidence of Zero Maps in Low Degrees}

\

\begin{prop} Let $F$ be a binary quartic with real coefficients and no real roots. Then $\z_{\J}(F)=\z_{\H}(F)$=hyperbolic midpoint of the two roots of $F$ in $\H_2$.
\end{prop}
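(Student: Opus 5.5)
The plan is to apply the Symmetry Principle through its first corollary, the half-turn case. As a fallback, I would also check the claim directly from the two equilibrium laws. A binary quartic with real coefficients and no real roots has exactly two upper half-plane roots $\alpha_1,\alpha_2$, possibly equal.

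\emph{Degenerate case.} If $\alpha_1=\alpha_2=\alpha$, then every $t_i=0$. Both equilibrium sums vanish exactly at $P=\alpha$, and $\alpha$ is trivially the midpoint. (For Julia's zero this is the unstable case, handled by the natural extension in the preceding proposition.) So assume $\alpha_1\neq\alpha_2$.

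\emph{Direct check.} Let $P$ be the hyperbolic midpoint of the geodesic segment $[\alpha_1,\alpha_2]$. Then $t_1=t_2=t>0$ and ${\bf v}_2=-{\bf v}_1$. Hence
\[
\tanh(t_1){\bf v}_1+\tanh(t_2){\bf v}_2=0
\quad\text{and}\quad
\sinh(t_1){\bf v}_1+\sinh(t_2){\bf v}_2=0 .
\]
By the equilibrium characterization of $\z_{\J}$ in the previous section, $P=\z_{\J}(F)$. By Proposition~\ref{centerfromgeometry}, which characterizes the center of mass uniquely through the $\sinh$-weighted equilibrium, $P=\z_{\H}(F)$.

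For Julia's zero, uniqueness of the equilibrium point comes from Corollary~\ref{tangent vectors}, which applies because the quartic is stable: two distinct double-free pairs of roots means no root has multiplicity $\geq 2$.

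\emph{Symmetry route.} Alternatively, take $M\in\Sl_2(\R)$ inducing the hyperbolic half-turn about $P$. This is the elliptic element of order two in $\mathrm{PSL}_2(\R)$ fixing $P$. It swaps $\alpha_1$ and $\alpha_2$, since it reverses the geodesic through $P$ and preserves distances from $P$. Its only fixed point in $\H^2$ is $P$. The first corollary of the Symmetry Principle then gives $\z_{\J}(F)=\z_{\H}(F)=P$.

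\emph{Main obstacle.} The only delicate point is justifying that the equilibrium condition determines the point uniquely, and this is exactly what the cited results supply. If I take the symmetry route instead, I must additionally confirm that a half-turn lies in the image of $\Sl_2(\R)$. For $P={\bf i}$ this is the matrix
\[
\begin{pmatrix}0&-1\\1&0\end{pmatrix},
\]
which acts by $z\mapsto -1/z$, and the general case follows by conjugation. That step is routine.
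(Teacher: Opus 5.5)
Your proof is correct, and your symmetry route is exactly the paper's argument. The paper takes $P$ to be the hyperbolic midpoint, notes that the half-turn about $P$ preserves the pair of roots, and invokes the half-turn corollary of the Symmetry Principle. Your direct check via the two equilibrium laws ($t_1=t_2$, ${\bf v}_2=-{\bf v}_1$) and your separate handling of the unstable case $\alpha_1=\alpha_2$ are valid supplements that the paper leaves implicit.
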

\begin{proof}  Let $P$ be the hyperbolic midpoint of the two roots $\alpha_1,\alpha_2 \in \H^2$. Then use the half-turn corollary of the previous section to conclude that $\z_{\J}(F)=\z_{\H}(F)=P$.
\end{proof}

\begin{prop}
Let $F$ be a binary sextic with real coefficients and no real roots. Denote by $\alpha_1,\alpha_2,\alpha_3$ its roots in $\H_2$. Then $\z_{\J}(F)=\z_H(F)$ iff they either are not collinear but form an equilateral triangle or they are collinear and one of them is equidistant from the other two.
\end{prop}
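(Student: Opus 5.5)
The converse direction follows from the symmetry principle, and the forward direction is a statement about three vectors in the plane $T_P\H^2$, where $P$ is the common zero. Suppose first that $P=\z_{\J}(F)=\z_{\H}(F)$, and write $t_i=d_H(P,\alpha_i)$ and ${\bf v}_i$ for the unit directions from $P$ to $\alpha_i$. Then $P$ satisfies both equilibrium laws,
\[
\sum_{i=1}^3 (\tanh t_i){\bf v}_i=0,\qquad \sum_{i=1}^3(\sinh t_i){\bf v}_i=0 .
\]

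First dispose of degenerate configurations. If some $t_i=0$, say $\alpha_3=P$, then both laws involve only $\alpha_1,\alpha_2$. Either both of those are also at $P$, which is the trivial collinear case, or ${\bf v}_1=-{\bf v}_2$ with $\tanh t_1=\tanh t_2$. Then $t_1=t_2$, so $P$ is the hyperbolic midpoint of $\alpha_1,\alpha_2$, and the roots are collinear with $\alpha_3$ the midpoint.

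Now assume all $t_i>0$. Suppose the ${\bf v}_i$ span only a line, so that all $\alpha_i$ lie on one geodesic through $P$. The laws become scalar equations $\sum \epsilon_i\tanh t_i=0$ and $\sum\epsilon_i\sinh t_i=0$ with signs $\epsilon_i=\pm1$. The signs cannot all agree, so two share a sign and one differs; say $\tanh t_1+\tanh t_2=\tanh t_3$ and $\sinh t_1+\sinh t_2=\sinh t_3$. I will show this has no solution with $t_1,t_2>0$. Set $a=\tanh t_1+\tanh t_2<\ldots$. Write $f=\sinh\circ\tanh^{-1}$, that is, $f(u)=u/\sqrt{1-u^2}$. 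The system says $f(u_1)+f(u_2)=f(u_1+u_2)$. Since $f(0)=0$ and $f$ is strictly convex on $[0,1)$, $f$ is strictly superadditive, so $f(u_1+u_2)>f(u_1)+f(u_2)$ when $u_1,u_2>0$. This is a contradiction. (If $u_1+u_2\ge 1$, the first equation already fails.) So the collinear case with all $t_i>0$ cannot occur, and collinear coincidence forces some root to equal $P$, which is the midpoint case above.

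Otherwise the ${\bf v}_i$ span the plane, and this is the main step. Three vectors with positive coefficients summing to zero in $\R^2$ determine the coefficients up to a common scalar: writing ${\bf v}_3$ in terms of ${\bf v}_1,{\bf v}_2$, the kernel of the map $(c_1,c_2,c_3)\mapsto\sum c_i{\bf v}_i$ is one-dimensional. Hence $(\tanh t_i)$ and $(\sinh t_i)$ are proportional, so $\cosh t_i=\sinh t_i/\tanh t_i$ is constant. The $t_i>0$ then all equal some $t$, and the three equal coefficients give $\sum{\bf v}_i=0$, so the angles between the ${\bf v}_i$ are all $2\pi/3$. The triangle is then equilateral: the hyperbolic law of cosines gives equal side lengths from $t_1=t_2=t_3$ and equal angles.

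For the converse, an equilateral hyperbolic triangle is invariant under the order-three rotation about its center. That rotation is realized by an elliptic element of $\Sl_2(\R)$, so Corollary \ref{rotation} applies. In the collinear midpoint case, the half-turn about the middle root swaps the outer two roots and fixes the middle one, so the half-turn corollary applies.

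The delicate point is the strict superadditivity argument. I would verify the convexity of $u/\sqrt{1-u^2}$ by noting that its second derivative is $3u(1-u^2)^{-5/2}>0$ on $(0,1)$.
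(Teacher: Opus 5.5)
Your proof is correct and follows the paper's structure. You rule out the collinear case with all $t_i\neq 0$. In the spanning case you use the one-dimensional kernel to get $(\sinh t_i)\propto(\tanh t_i)$, so the $t_i$ coincide and the ${\bf v}_i$ meet at angles $2\pi/3$. For the converse you invoke the half-turn and order-three rotation corollaries. The real differences are two. First, you kill the collinear case with strict superadditivity of $h(u)=u/\sqrt{1-u^2}$, which is the convexity device the paper saves for octics, instead of $\sinh(a+b)>\sinh a+\sinh b$ and the $\tanh$ addition formula. Second, you dispose of $t_i=0$ first and then split on the span of the ${\bf v}_i$, which makes explicit a point the paper leaves implicit: in the collinear case $P$ lies on the common geodesic. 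Delete the stray fragment ``Set $a=\tanh t_1+\tanh t_2<\ldots$''.
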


\begin{proof} Suppose first that $\z_{\J}(F)=\z_H(F)$ and $\alpha_1,\alpha_2,\alpha_3$ lie on the same hyperbolic geodesic. Choose an orientation in this geodesic and let $t_i\in \R$ be the signed hyperbolic distance of $\alpha_i$ from the common zero map value $P=\z_{\J}(F)=\z_{\H}(F)$. Notice that if $t_i=-t_j$ then ${\bf v}_i=-{\bf v}_j$. Hence we obtain $$\sinh t_1+\sinh t_2+\sinh t_3=0$$ and $$\tanh t_1+\tanh t_2+\tanh t_3=0$$ 

\

\noindent Assume that all $t_i\neq 0$. Relabel and assume that $$t_1=a>0,~t_2=b>0, ~t_3=-c<0.$$ We obtain $$\sinh c=\sinh a+\sinh b,~\tanh c=\tanh a+\tanh b$$ But $$\sinh(a+b)=\sinh a \cosh b+\cosh a\sinh b>\sinh a+\sinh b=\sinh c.$$ It follows that $c<a+b$ since $\sinh t$ is strictly increasing. But then $$\tanh c<\tanh (a+b)=\frac{\tanh a+\tanh b}{1+\tanh a\tanh b}<\tanh a+\tanh b=\tanh c!.$$ This is a contradiction. It follows that one, say $t_2=0$. Then $\sinh t_1+\sinh t_3=0$, hence $t_1=-t_3>0$. Thus $\alpha_2$ is the common zero map and $\alpha_1,\alpha_3$ are on the opposite sides of $\alpha_2$ and equidistant from $\alpha_2$.

\

\noindent Now suppose that $\z_{\J}(F)=\z_H(F)$ and $\alpha_1,\alpha_2,\alpha_3$ are \underline{not} in the same hyperbolic geodesic and let $P$ be the common zero map value. Then at $P$ we have $$\sum_{i=1}^3(\sinh t_i)v_i=0,~\sum_{i=1}^3(\tanh t_i)v_i=0$$ Because the roots $\alpha_1,\alpha_2,\alpha_3$ are not collinear, the directions ${\bf v}_i$ are not all parallel; they span the two-dimensional tangent space of $\H^2$. Hence, the kernel has dimension $3-2=1$. It follows that $$\left<\sinh t_1,\sinh t_2,\sinh t_3\right>=\lambda \left<\tanh t_1,\tanh t_2,\tanh t_3\right>.$$ None of the $t_i$ is zero; otherwise $P=\alpha_i$ and either equilibrium equation would force the other two tangent directions to be opposite, contradicting the non-collinearity assumption. Hence, $$\cosh t_1=\cosh t_2=\cosh t_3=\lambda,$$ i.e. $$t_1=t_2=t_3=t$$ It follows that $${\bf v}_1+{\bf v}_2+{\bf v}_3=0,$$ and since they are all unit vectors the angle between each pair among ${\bf v}_1,{\bf v}_2,{\bf v}_3$ is $2\pi/3$. It follows that the three hyperbolic triangles $$\triangle(P\alpha_1\alpha_2),~\triangle(P\alpha_1\alpha_3),~\triangle(P\alpha_3\alpha_2)$$ are congruent by $SAS$. Hence $\triangle(\alpha_1,\alpha_2,\alpha_3)$ is equilateral.

\begin{figure}
\centering
\includegraphics[width=0.75\textwidth]{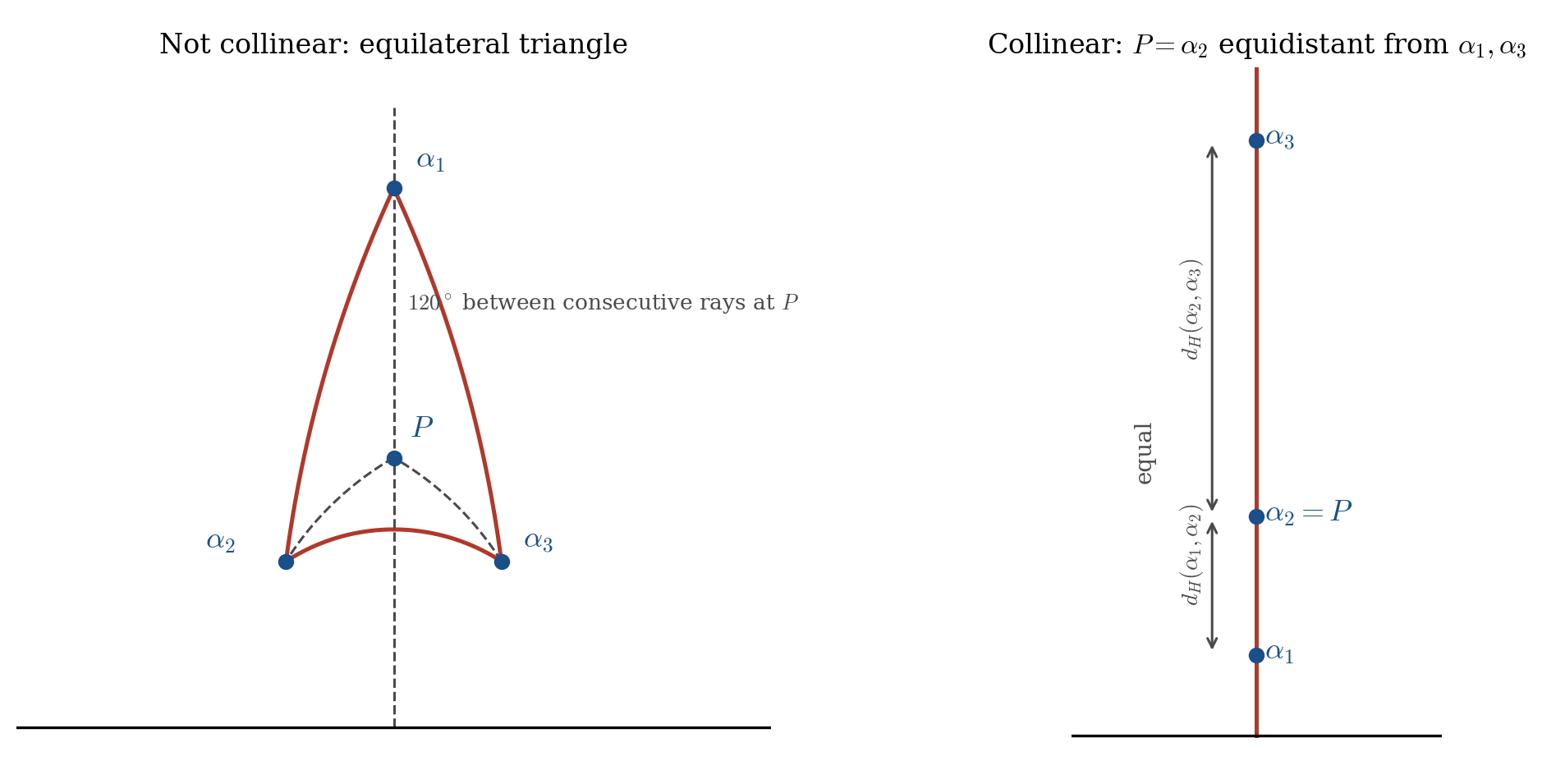}
 \caption{Two cases of sextics where $\z_{\J}(F)=\z_{\H}(F)$ }
   \label{fig2}
\end{figure}

\

\noindent The converse statements follow easily from the corollaries of the previous section.

\end{proof}
\

\begin{prop} Let $F$ be a binary octic with real coefficients and no real roots. Assume that the roots $\alpha_1,\alpha_2,\alpha_3,\alpha_4$ in $\H_2$ lie on the same hyperbolic geodesic. Then $\z_{\J}(F)=\z_{\H}(F)=P$ iff those four points can be split in two pairs each having $P$ as their hyperbolic midpoint.
\end{prop}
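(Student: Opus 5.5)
The plan is to work with signed distances along the common geodesic, exactly as in the collinear case of the sextic proposition. Orient the geodesic containing $\alpha_1,\dots,\alpha_4$ and let $t_i\in\R$ be the signed hyperbolic distance from $P$ to $\alpha_i$. A zero map value of a collinear configuration lies on that geodesic (for instance by reflection symmetry across it), so ${\bf v}_i=\pm{\bf v}$ according to the sign of $t_i$. Then $\z_{\J}(F)=\z_{\H}(F)=P$ is equivalent to the two scalar equations
\[
\sum_{i=1}^4\sinh t_i=0,\qquad \sum_{i=1}^4\tanh t_i=0 .
\]
The claim becomes: these hold if and only if the multiset $\{t_i\}$ splits into two pairs of the form $\{s,-s\}$.

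\textbf{The converse direction.} If the roots split into two pairs with hyperbolic midpoint $P$, then the half-turn about $P$ preserves $S$. The half-turn corollary of the Symmetry Principle then gives $\z_{\J}(F)=\z_{\H}(F)=P$. This also covers the degenerate pair $\{0,0\}$, that is, two roots sitting at $P$.

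\textbf{The forward direction.} I split into cases by the signs of the $t_i$.

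\emph{Some $t_i=0$.} Say $t_4=0$. The remaining three values satisfy both equations, and the argument from the sextic proof shows that one of them is $0$ and the other two are $\pm a$. So the pairs are $\{0,0\}$ and $\{a,-a\}$.

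\emph{All $t_i\neq0$, with a $3$--$1$ sign split.} Write the positive values as $a,b,d$ and the negative one as $-c$. Then $\sinh c=\sinh a+\sinh b+\sinh d$. Strict superadditivity of $\sinh$ on positives gives $\sinh c<\sinh(a+b+d)$, hence $c<a+b+d$. Strict subadditivity of $\tanh$ on positives (from the addition formula, iterated) then gives
\[
\tanh c<\tanh(a+b+d)<\tanh a+\tanh b+\tanh d=\tanh c,
\]
a contradiction. The $1$--$3$ split is symmetric, and the $4$--$0$ split is impossible.

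\emph{All $t_i\neq0$, with a $2$--$2$ sign split.} This is the main step. Let $a,b>0$ and $-c,-d<0$. We need to show that the equalities
\[
\sinh a+\sinh b=\sinh c+\sinh d=:S,\qquad \tanh a+\tanh b=\tanh c+\tanh d
\]
force $\{a,b\}=\{c,d\}$.

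Substitute $u=\sinh t$, so that $\tanh t=g(u)=u/\sqrt{1+u^2}$. On $u>0$ we have $g'(u)=(1+u^2)^{-3/2}$, which is strictly decreasing, so $g$ is strictly concave. With $S$ fixed, the function
\[
h(u)=g(u)+g(S-u),\qquad 0\le u\le S,
\]
is strictly concave and symmetric about $S/2$. It is therefore strictly increasing on $[0,S/2]$. So the value of $h$ determines $\min(u_1,u_2)$, and hence the unordered pair $\{u_1,u_2\}$. Applying this to $(\sinh a,\sinh b)$ and $(\sinh c,\sinh d)$ gives $\{a,b\}=\{c,d\}$. Pairing $a$ with $-a$ and $b$ with $-b$ exhibits the two pairs with hyperbolic midpoint $P$.

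I expect this last case to be the main obstacle. The concavity reparametrization is the key idea, and the care needed is mostly in justifying the reduction to the scalar equations.
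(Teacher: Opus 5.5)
Your proof is correct and takes essentially the same approach as the paper: you reduce to the two scalar equations in signed distances, rule out the unbalanced sign splits, treat the 2--2 split with a symmetric one-variable function that is monotone on each half, and get the converse from the half-turn corollary. The differences are cosmetic: the paper parametrizes by $x=\tanh t$ and uses strict convexity of $h(x)=x/\sqrt{1-x^2}$ (also for the 3--1 split), whereas you parametrize by $u=\sinh t$ with the concave inverse $g(u)=u/\sqrt{1+u^2}$ and reuse the sextic $\sinh$/$\tanh$ argument for the 3--1 split; you also justify that $P$ lies on the geodesic, which the paper takes for granted.
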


\begin{proof} Let $P$ be a point in the geodesic and $t_i$ the signed hyperbolic distance of $\alpha_i$ from $P$ for $i=1,2,3,4$. 

\

\noindent Assume that $P$ is simultaneously the hyperbolic and the Julia zero map value. Recall that the following identities hold \[\sum_{i=1}^4 \sinh t_i=0,~\text{and}~\sum_{i=1}^4 \tanh t_i=0\] If we let $x_i=\tanh t_i\in (-1,1)$ then $\displaystyle{\sinh t_i=\frac{x_i}{\sqrt{1-x_i^2}}}$. Let $\displaystyle{h(x)=\frac{x}{\sqrt{1-x^2}}}$; it is odd in $(-1,1)$ and is a strictly convex function in $(0,1)$. With this notation, $P$ is the common zero map value iff  \[\sum_{i=1}^4 x_i=0,~\text{and}~\sum_{i=1}^4 h(x_i)=0\]

\noindent We first show that two of $\alpha_1,\alpha_2,\alpha_3,\alpha_4$ are on one side of $P$ and the other two are on the other side of $P$. Indeed, assume not and let $x_1=a,x_2=-b,x_3=-c,x_4=-d$ with all $a,b,c,d$ positive. Then $1>a=b+c+d$ and since $h(0)=0$ and $h(x)$ is strictly convex we obtain $$h(b+c+d)>h(b)+h(c)+h(d)$$ But $0=h(a)+h(-b)+h(-c)+h(-d)=h(a)-h(b)-h(c)-h(d)$ hence $$h(a)=h(b)+h(c)+h(d)$$ and we obtain a contradiction.

\noindent So, suppose there are two on each side: $x_1=a,x_2=b,x_3=-c,x_4=-d$ with all $1>a,b,c,d>0$. Then $$0<a+b=c+d=S<2$$ and $$h(a)+h(b)=h(c)+h(d)$$ Let $f(x)=h(x)+h(S-x)$ with domain $(\text{max}\{0, S-1\},\text{min}\{1,S\}).$ We see that  $f(a)=f(c)$. Now $f'(x)=h'(x)-h'(S-x)$ and $h'$ is increasing in $(0,1)$. Since $S/2<1$ we find that $f'(x)<0$ i.e. $f(x)$ is decreasing for $0<x<S/2$ and $f'(x)>0$ i.e. $f(x)$ is increasing for $x>S/2$. We also have that $f(x)=f(S-x)$. Hence, either $c=a$ or $c=S-a=b$. It follows that $$\{a,b\}=\{c,d\}$$ as sets. 

\noindent But $x=\tanh t$ is odd and injective, hence $$\{t_1,t_2,t_3,t_4\}=\{r,-r,s,-s\}$$ for some $r,s\geq 0$. After relabeling $d_H(P,\alpha_1)=d_H(P,\alpha_2),~d_H(P,\alpha_3)=d_H(P,\alpha_4)$ with the members of each pair lying on opposite sides of $P$.

\begin{figure}
\centering

\includegraphics[width=0.75\textwidth]{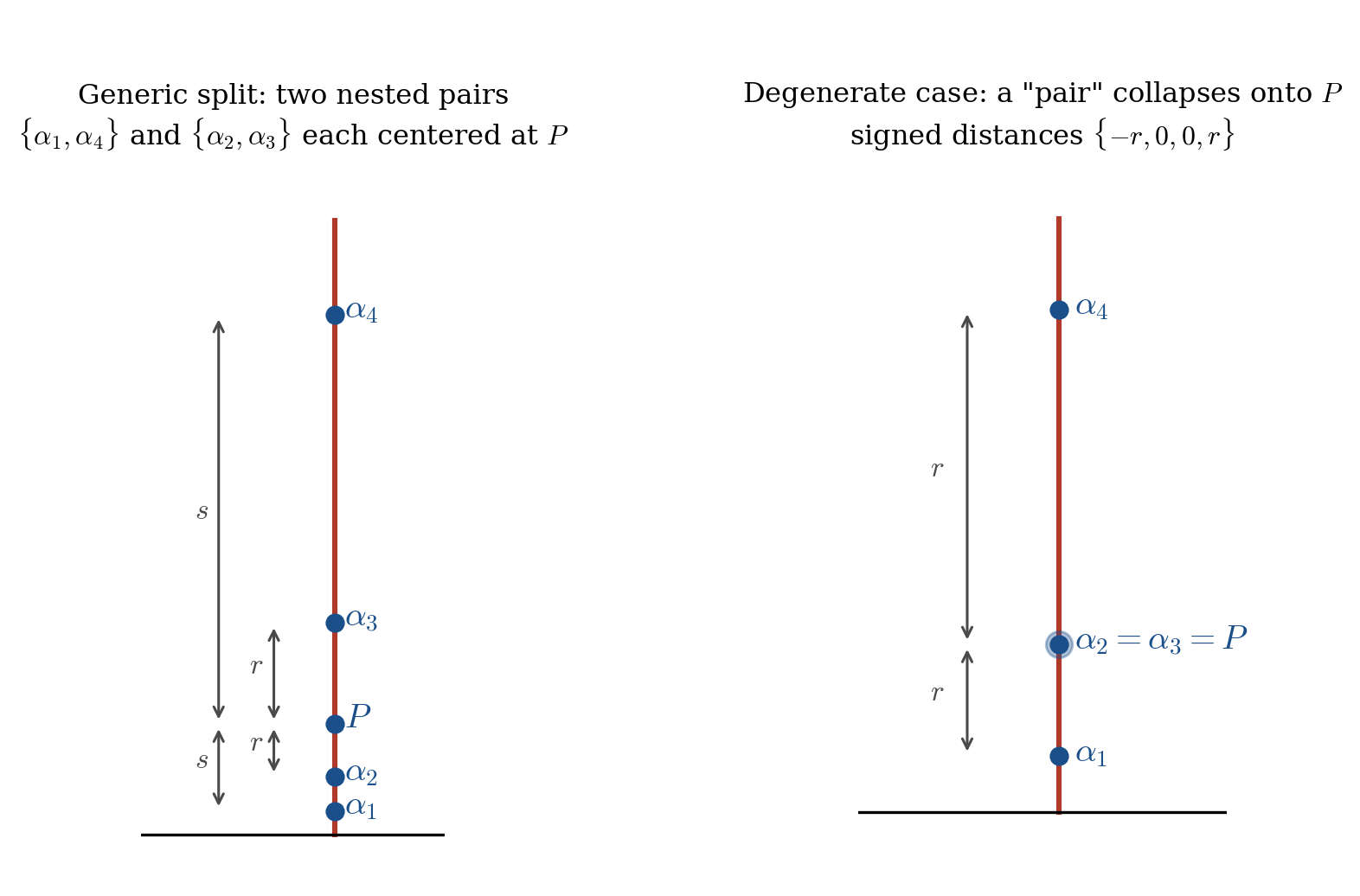}

 \caption{Two cases of octics with $\z_{J}(F)=\z_{\H}(F) $ }
   \label{fig3}
\end{figure}

\

\noindent So far we assumed that all $x_i$ are nonzero. If one $x_i=0$,  the remaining three satisfy $\sum x_i=0$ and $\sum h(x_i)=0$. The three-point collinear argument in the previous proposition implies that those remaining three are of the form $\{-r,0,r\}$ in signed $t$-coordinates. Hence the four are $\{-r,0,0,r\}$ which still split into two midpoint pairs. More zeroes is even easier.

\

\noindent The converse statement follows from the half-turn corollary of the previous section. 

\end{proof}

\

\section{Explicit Formula and Asymptotic Comparison}
\label{sec:explicit-asymptotic}

\

\noindent The preceding sections compare the Julia and hyperbolic zero maps through
their equilibrium laws. For totally complex real forms, the hyperbolic zero
has an additional advantage: it can be computed explicitly from the real
quadratic factors of the form. We recall this formula first, and then use it
to compare the two reductions computationally and in degenerating families.

\begin{prop}[{\cite{ES}}]
\label{prop:explicit-hyperbolic-zero}
Let
\[
F(X,Z)=\prod_{i=1}^r\left(X^2+a_iXZ+b_iZ^2\right)
\]
be a totally complex real binary form, and put
\[
d_i=\sqrt{4b_i-a_i^2},\qquad i=1,\ldots,r.
\]
If
\[
\xi_{\H}(F)=t+iu,
\]
then
\[
t=-\frac12
\frac{\displaystyle\sum_{i=1}^r a_i/d_i}
     {\displaystyle\sum_{i=1}^r 1/d_i},
\qquad
u^2=
\frac{\displaystyle\sum_{i=1}^r b_i/d_i}
     {\displaystyle\sum_{i=1}^r 1/d_i}
-t^2.
\]
\noindent  In particular, the hyperbolic center quadratic is defined over
\[
K(d_1,\ldots,d_r),
\qquad
K=\Q(a_1,\ldots,a_r,b_1,\ldots,b_r).~\dagger
\]
\end{prop}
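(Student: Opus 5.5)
The plan is to use the characterization of the center of mass as the unique minimizer of
\[
\Phi(u,v)=\sum_{i=1}^r \frac{(u-x_i)^2+(v-y_i)^2}{v\,y_i},
\]
where $\alpha_i=x_i+{\bf i}y_i$ are the upper half-plane roots (one per quadratic factor). Minimizing this in closed form gives the formula. First I read off the roots from the factors. For $X^2+a_iXZ+b_iZ^2$ the roots are
\[
\alpha_i=-\frac{a_i}{2}+{\bf i}\,\frac{d_i}{2},
\]
so $x_i=-a_i/2$, $y_i=d_i/2$, and
\[
x_i^2+y_i^2=\frac{a_i^2+d_i^2}{4}=b_i .
\]

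Next I expand $\Phi$ as a function of $(u,v)$:
\[
\Phi(u,v)=\frac{1}{v}\sum_i \frac{u^2-2ux_i+x_i^2+y_i^2}{y_i}\;-\;2\sum_i 1\;+\;v\sum_i\frac{1}{y_i}.
\]
Introduce $A=\sum 1/y_i$, $B=\sum x_i/y_i$ and $C=\sum (x_i^2+y_i^2)/y_i$. Then $\Phi=(Au^2-2Bu+C)/v+Av-2r$.

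For fixed $v>0$, minimizing in $u$ gives $u=B/A$, which is the paper's real part $t$. Substituting,
\[
t=\frac{\sum(-a_i/2)(2/d_i)}{\sum 2/d_i}=-\frac12\frac{\sum a_i/d_i}{\sum 1/d_i}.
\]
Then $\Phi=(C-B^2/A)/v+Av-2r$, and minimizing in $v$ gives
\[
v^2=\frac{C}{A}-\frac{B^2}{A^2}=\frac{C}{A}-t^2 .
\]
Since $x_i^2+y_i^2=b_i$, we get $C/A=\sum (b_i/d_i)/\sum(1/d_i)$, which is the stated formula for $u^2$ (the paper's imaginary part). The common factors of $2$ cancel in every ratio.

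Two small checks are needed. By Cauchy--Schwarz, $C/A\ge B^2/A^2$ with strict inequality since $y_i>0$, so $v^2>0$. The critical point is the global minimum because $\Phi$ is coercive on the half-plane, and uniqueness is already given by the earlier proposition.

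The field-of-definition claim is then immediate. The hyperbolic quadratic is $X^2-2tXZ+(t^2+u^2)Z^2$, and both $t$ and $t^2+u^2=C/A$ are rational expressions in $a_i,b_i,d_i$. So no square root of $u^2$ is needed, and the quadratic is defined over $K(d_1,\dots,d_r)$.

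The main obstacle is purely bookkeeping: carrying the factors $d_i/2$ consistently through the ratios. There is no conceptual difficulty.
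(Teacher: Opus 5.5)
Your proof is correct. The paper gives no argument for this statement, since it is quoted from \cite{ES}, so the comparison has to be with the tools the paper records.

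You take the characterization of the center as the unique minimizer of $\sum_j\bigl((u-x_j)^2+(v-y_j)^2\bigr)/(vy_j)$ and solve that problem in closed form. The root data $\alpha_i=-a_i/2+{\bf i}\,d_i/2$ and $|\alpha_i|^2=b_i$ are right. So are the reduction to $(Au^2-2Bu+C)/v+Av-2r$ and the identification of $B/A$ and $C/A-B^2/A^2$ with the stated formulas. Your two-step minimization (first in $u$, then of $K/v+Av$ with $K=C-B^2/A>0$) already shows that the critical point is the unique global minimizer. The appeals to coercivity and to the uniqueness part of the earlier proposition are therefore unnecessary.

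A shorter route uses only Galperin's definition and the explicit isometry onto the hyperboloid. The root $\alpha_i$ goes to $\mathbf x_i=d_i^{-1}(1-b_i,\,-a_i,\,1+b_i)$. A point $(X_1,X_2,X_3)$ of the hyperboloid corresponds to the point of $\H^2$ with real part $X_2/(X_1+X_3)$ and squared modulus $(X_3-X_1)/(X_1+X_3)$. Both expressions are homogeneous of degree zero, so they can be evaluated on the unnormalized sum $\sum_i\mathbf x_i$. This gives $t$ and $t^2+u^2$ immediately, with no optimization. It also needs no positivity check, because the normalized sum lies on the hyperboloid automatically.

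Equivalently, the hyperbolic quadratic is proportional to $\sum_i d_i^{-1}(X^2+a_iXZ+b_iZ^2)$, the sum of the factors rescaled to a common discriminant. That makes the field-of-definition claim transparent.

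Your route stays entirely in the upper half-plane and exhibits the minimizing property explicitly. However, it rests on the minimizer characterization, which is itself imported from \cite{ES}. The hyperboloid computation needs only the definition.
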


\

\begin{remark} For forms given as products of real quadratic factors, these formulas give
the hyperbolic zero directly from the coefficients of the factors. In
contrast, Julia's zero is in general determined by a nonlinear minimization
or equilibrium problem. Thus, once the real quadratic factorization is
known, the hyperbolic zero avoids the nonlinear computation required for
Julia's zero.~$\dagger$
\end{remark}

\begin{exa}
Consider the sextic
\[
F(X,Z)
=
(X^2+21XZ+111Z^2)
(X^2+10XZ+169Z^2)
(X^2+2XZ+128Z^2).
\]
Numerically,
\[
\xi_{\H}(F)\approx -9.51918+5.01648{\bf i},
\qquad
\xi_{\J}(F)\approx -7.40710+8.49127\bf i.
\]
\noindent  The two points lie in different translates of the standard fundamental
domain. Hyperbolic reduction translates by $+10$, giving
\[
F_{\H}(X,Z)=F(X-10Z,Z),
\]
\noindent  whereas Julia reduction translates by $+7$, giving
\[
F_{\J}(X,Z)=F(X-7Z,Z).
\]
Explicitly,
\[
\begin{aligned}
F_{\H}(X,Z)
={}&X^6-27X^5Z+530X^4Z^2-4593X^3Z^3\\
&\quad+30587X^2Z^4+30030XZ^5+35152Z^6,
\end{aligned}
\]
while
\[
\begin{aligned}
F_{\J}(X,Z)
={}&X^6-9X^5Z+260X^4Z^2-123X^3Z^3\\
&\quad+11795X^2Z^4+137304XZ^5+313612Z^6.
\end{aligned}
\]
\noindent  Thus in this example the hyperbolically reduced representative has height
$35152$, whereas the Julia-reduced representative has height $313612$,
approximately $8.9$ times larger.
\end{exa}

\noindent  The next result gives a different kind of comparison. It shows that a
strict majority of roots confined to a compact set controls Julia's zero,
regardless of the positions of the remaining roots.

\begin{prop}
\label{prop:majority}
Let $n\geq3$ and let $K\subset\H^2$ be compact. There exists a compact set
\[
K'=K'(K,n)\subset\H^2
\]
\noindent  with the following property. If $F(X,Z)$ is a stable, totally complex real
binary form of degree $2n$ and more than half of its upper-half-plane roots,
counted with multiplicity, lie in $K$, then
\[
\xi_{\J}(F)\in K'.
\]
\noindent  Equivalently, it is enough that at least
\[
m_0=\left\lfloor\frac n2\right\rfloor+1
\]
of the upper-half-plane roots lie in $K$.
\end{prop}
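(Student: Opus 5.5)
The plan is to argue by contradiction using the equilibrium characterization of Julia's zero from Section 5: $P=\z_{\J}(F)$ if and only if $\sum_i \tanh(t_i){\bf v}_i=0$. Let $m\geq m_0$ be the number of roots in $K$. The idea is that, seen from a point $P$ far from $K$, all the roots in $K$ lie in almost the same direction and almost at infinite distance. Their combined contribution then has length close to $m$. The remaining $n-m$ roots contribute a vector of length at most $\sum_{i\notin I}\tanh t_i<n-m$, since $\tanh<1$. Because $m>n/2$, we have $m>n-m$, so the equilibrium cannot hold at such a $P$.

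To make this quantitative, fix $p_0\in K$ and let $D=\operatorname{diam}_H(K)$. Take $P$ with $R=d_H(P,p_0)$ large, and let ${\bf w}\in T_P\H^2$ be the unit vector pointing toward $p_0$. For each root $\alpha_i\in K$ the triangle inequality gives $t_i\geq R-D$, so $\tanh t_i\geq\tanh(R-D)\to 1$. The key geometric lemma is that the angle $\theta_i$ at $P$ between ${\bf v}_i$ and ${\bf w}$ tends to $0$ uniformly as $R\to\infty$. This follows from hyperbolic trigonometry in the triangle $P\,p_0\,\alpha_i$: the side opposite $P$ has length at most $D$, and the side $P p_0$ has length $R$. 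The hyperbolic law of sines gives $\sin\theta_i=\sinh d(p_0,\alpha_i)\sin(\angle p_0)/\sinh t_i\leq \sinh D/\sinh(R-D)$, which tends to $0$. Since $t_i\geq R-D$ and the third side is short, $\theta_i$ is acute, so in fact $\theta_i\to0$ and not toward $\pi$. Hence
\[
\Bigl\langle \sum_{i\in I}\tanh(t_i){\bf v}_i,{\bf w}\Bigr\rangle\geq m\tanh(R-D)\cos\theta_{\max}(R),
\]
and this lower bound tends to $m$.

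Next, pair this with ${\bf w}$. Equilibrium would force $\langle\sum_{i\notin I}\tanh(t_i){\bf v}_i,{\bf w}\rangle\leq-(\text{that quantity})$, but the left side is at least $-(n-m)$. Choose $R_0=R_0(K,n)$ so that $m\tanh(R-D)\cos\theta_{\max}(R)>n-m$ for all $R\geq R_0$ and every $m\geq m_0$. Taking $m=m_0$ is the worst case, and it works because $m_0-(n-m_0)\geq 1$. We then set $K'=\{P: d_H(P,p_0)\leq R_0\}$, a closed hyperbolic ball and hence compact in $\H^2$. Stability guarantees that $\z_{\J}(F)$ exists and satisfies the equilibrium by the Section 5 proposition, so $\z_{\J}(F)\in K'$. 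The roots outside $K$ may sit anywhere, even at $P$ itself, where $t_i=0$ and the term vanishes harmlessly. This is why the bound $|\tanh t_i{\bf v}_i|<1$ for those roots is all we need.

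The main obstacle is the uniform angle estimate, specifically excluding obtuse configurations and getting a bound independent of where $P$ sits. I would first try the law-of-sines bound above, combined with the observation that $t_i\geq R-D>D\geq d(p_0,\alpha_i)$, which makes the angle at $P$ the smallest in the triangle and hence acute. If that proves awkward, I would instead use $\Sl_2(\R)$-equivariance to move $P$ to ${\bf i}$ and $p_0$ onto the imaginary axis far above ${\bf i}$. In that normalization, explicit upper half-plane coordinates show that every point within distance $D$ of $p_0$ has direction at ${\bf i}$ close to vertical, using the tangent formula ${\bf u}(P,\alpha)$ computed in Section 5.
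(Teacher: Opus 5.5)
Your proof is correct and uses the same mechanism as the paper's. The $m\geq m_0$ roots in $K$ lie far from $P$ and are almost aligned, so their $\tanh$-weighted contribution has size close to $m$, and the remaining $n-m<m$ terms, each of norm less than $1$, cannot cancel it. The differences are only in execution. The paper argues by contradiction along a sequence $P_r=\xi_{\J}(F_r)$ and uses the hyperbolic law of cosines to show the pairwise angles $\theta_{jk,r}\to 0$. You instead project onto the direction ${\bf w}$ toward a fixed $p_0\in K$ and use the law of sines together with an acuteness argument, which has the small advantage of giving an explicit radius $R_0(D,n)$ for $K'$.
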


\begin{proof}
Suppose otherwise. Then there is a sequence of stable, totally complex real
binary forms $F_r$ of degree $2n$ such that, after relabeling,
\[
\alpha_{1,r},\ldots,\alpha_{m_0,r}\in K,
\qquad
m_0=\left\lfloor\frac n2\right\rfloor+1,
\]
while
\[
P_r:=\xi_{\J}(F_r)
\]
escapes every compact subset of $\H^2$, for example $d_H(P_r,{\bf i})>r$ for all $r$.

\

\noindent For $1\leq j\leq m_0$, put
\[
t_{j,r}=d_H(P_r,\alpha_{j,r}),
\]
\noindent and let ${\bf v}_{j,r}$ be the unit tangent vector at $P_r$ pointing toward
$\alpha_{j,r}$. Since $K$ is compact, let $R_K=\text{max}_{z\in K}d_H({\bf i},z)$. From $d_H(P_r,{\bf i})>r$ follows $$t_{j,r}\geq d_H(\P_r,{\bf i})-d_H({\bf i}, \alpha_{j,r})\geq r-R_K$$ for all large $r$. Thus
\[
t_{j,r}\longrightarrow\infty~~\text{hence}~~\tanh(t_{j,r})\longrightarrow 1.
\]
uniformly for $1\leq j\leq m_0$. 

\

\noindent The directions ${\bf v}_{j,r}$ also become asymptotically parallel. Indeed, let
$\theta_{jk,r}$ be the angle at $P_r$ between the geodesics toward
$\alpha_{j,r}$ and $\alpha_{k,r}$. The hyperbolic law of cosines gives
\[
\cos\theta_{jk,r}
=
\frac{\cosh t_{j,r}\cosh t_{k,r}
-\cosh d_H(\alpha_{j,r},\alpha_{k,r})}
{\sinh t_{j,r}\sinh t_{k,r}}.
\]
The distances $d_H(\alpha_{j,r},\alpha_{k,r})$ are uniformly bounded,
whereas $t_{j,r},t_{k,r}\to\infty$, so
\[
\theta_{jk,r}\longrightarrow0.
\]
Taking ${\bf v}_r={\bf v}_{1,r}$, we therefore have
\[
\|{\bf v}_{j,r}-{\bf v}_r\|\longrightarrow0,
\qquad 1\leq j\leq m_0.
\]
It follows that
\[
\left\|
\sum_{j=1}^{m_0}\tanh(t_{j,r}){\bf v}_{j,r}
\right\|
\longrightarrow m_0.
\]

\noindent  On the other hand, Julia's equilibrium condition gives
\[
\sum_{j=1}^{m_0}\tanh(t_{j,r}){\bf v}_{j,r}
=
-\sum_{j=m_0+1}^{n}\tanh(t_{j,r}){\bf v}_{j,r}.
\]
Since every vector on the right has norm at most $1$,
\[
\left\|
\sum_{j=m_0+1}^{n}\tanh(t_{j,r}){\bf v}_{j,r}
\right\|
\leq n-m_0.
\]
Passing to the limit gives
\[
m_0\leq n-m_0,
\]
contradicting $m_0>n/2$. Hence $\z_{\J}(F)$ remains in a compact set
depending only on $K$ and $n$.
\end{proof}

\begin{remark}
\label{rem:majority-sharp}
The strict-majority hypothesis is sharp. Fix $\alpha\in\H^2$ and let
$\beta_M\in\H^2$ lie on a fixed geodesic through $\alpha$, with
\[
d_H(\alpha,\beta_M)\longrightarrow\infty.
\]
\noindent  Consider a configuration consisting of $m$ copies of $\alpha$ and $n-m$
copies of $\beta_M$, where $0<m<n$. The corresponding totally complex real
form is stable, since each complex root has multiplicity strictly less than
$n$.

\noindent  By symmetry, the Julia zero $P_M$ lies on the geodesic joining $\alpha$ and
$\beta_M$. Put
\[
a_M=d_H(P_M,\alpha),
\qquad
b_M=d_H(P_M,\beta_M).
\]
Then
\[
a_M+b_M=d_H(\alpha,\beta_M)\longrightarrow\infty,
\]
and Julia's equilibrium condition reduces to
\[
m\tanh(a_M)=(n-m)\tanh(b_M).
\]

\noindent If $m=n/2$, then $a_M=b_M$, so
\[
a_M=b_M=\frac12d_H(\alpha,\beta_M)\longrightarrow\infty.
\]
\noindent  Thus $P_M$ is the hyperbolic midpoint of $\alpha$ and $\beta_M$ and
escapes every compact subset of $\H^2$.

\

\noindent If $m<n/2$ and $a_M$ remained bounded along a subsequence, then
$b_M\to\infty$, and hence
\[
(n-m)\tanh(b_M)\longrightarrow n-m>m,
\]
whereas
\[
m\tanh(a_M)\leq m,
\]
\noindent  contradicting the equilibrium equation. Thus $a_M\to\infty$ in this case
as well. Consequently, the conclusion of Proposition~\ref{prop:majority}
can fail whenever $m\leq n/2$, and the strict-majority condition is
optimal.
\end{remark}

\noindent  The contrast with the hyperbolic zero becomes stronger when several roots
escape simultaneously.

\begin{prop}
\label{prop:escaping-minority}
Let $n\geq3$, let $K\subset\H^2$ be compact, and let
\[
m>\frac n2,\qquad k=n-m\geq1.
\]
For each $M>0$, let $F_M(X,Z)$ be a stable, totally complex real binary
form of degree $2n$ whose upper-half-plane roots satisfy
\[
\alpha_i^{(M)}=x_i^{(M)}+iy_i^{(M)}\in K,
\qquad i=1,\ldots,m,
\]
and
\[
\alpha_{m+j}^{(M)}=-M+iy_{j,M},
\qquad j=1,\ldots,k,
\]
where
\[
y_{j,M}\in[c,C]
\]
for fixed constants $0<c\leq C<\infty$. Then:

\begin{enumerate}
\item There exists a compact set $K'=K'(K,n)\subset\H^2$ such that $\z_{\J}(F_M)\in K'$
for all $M>0$.

\item Let $\xi_{\H}(F_M)=t_M+iu_M.$ Then there are constants $0<c_1<c_2<\infty$ and $c_3>0$, depending only
on $K,n,c,C$, such that, for all sufficiently large $M$,
\[
c_1M\leq |t_M|\leq c_2M,
\qquad
u_M\geq c_3M.
\]
In particular,
\[
\frac{\operatorname{Im}\xi_{\J}(F_M)}
     {\operatorname{Im}\xi_{\H}(F_M)}
\longrightarrow0.
\]
\end{enumerate}
\end{prop}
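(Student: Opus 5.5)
Part (1) is immediate from Proposition~\ref{prop:majority}. The $m>n/2$ roots $\alpha_1^{(M)},\dots,\alpha_m^{(M)}$ lie in $K$, and $F_M$ is stable. Taking $K'=K'(K,n)$ from that proposition gives $\xi_{\J}(F_M)\in K'$ for every $M$, with no dependence on $c,C$ or on the escaping roots.

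For part (2) I will use the explicit formula of Proposition~\ref{prop:explicit-hyperbolic-zero}. Write each root $\alpha=x+iy$ as the factor $X^2-2xXZ+(x^2+y^2)Z^2$, so that $a=-2x$, $b=x^2+y^2$ and $d=\sqrt{4b-a^2}=2y$. The formula then reads
\[
t_M=\frac{\sum_i x_i/y_i}{\sum_i 1/y_i},\qquad
u_M^2=\frac{\sum_i (x_i^2+y_i^2)/y_i}{\sum_i 1/y_i}-t_M^2 ,
\]
with all sums over $i=1,\dots,n$. Set $w_i=1/y_i$ and $W=\sum_i w_i$. Then $t_M$ is the $w$-weighted mean of the $x_i$, and
\[
u_M^2=\frac{1}{W}\sum_i w_i(x_i-t_M)^2+\frac{1}{W}\sum_i w_i y_i^2 ,
\]
which is the weighted variance of the $x_i$ plus a positive term. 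Let $\mu=\min_K\operatorname{Im}$, $\nu=\max_K\operatorname{Im}$ and $\rho=\max_K|\operatorname{Re}|$. All weights then lie in $[\min(1/\nu,1/C),\max(1/\mu,1/c)]$, so $W$ is bounded above and below by constants depending only on $K,n,c,C$.

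The bound on $t_M$ goes as follows. The escaping roots have $x=-M$ and weights $w\geq 1/C$, so
\[
t_M=\frac{-M\sum_{j}1/y_{j,M}+O(1)}{W},
\]
where the $O(1)$ term is at most $m\rho/\mu$. This gives
\[
|t_M|\geq \frac{k/C}{W_{\max}}M-O(1),\qquad |t_M|\leq M+\rho .
\]
For large $M$ this yields $c_1M\le|t_M|\le c_2M$. Since we need $c_1<c_2$, I may take $c_2=2$.

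The lower bound on $u_M$ uses the variance term. There are two clusters, at $x\approx 0$ (within $\rho$) and at $x=-M$, with cluster masses $A=\sum_{i\le m}w_i$ and $B=\sum_j w_{m+j}$, both bounded below. The within-cluster spread only increases the variance, so
\[
\operatorname{Var}_w(x)\ge \frac{AB}{(A+B)^2}(M-\rho)^2 .
\]
This is $\ge c_3^2M^2$ for large $M$, and hence $u_M\geq c_3M$. Finally, part (1) gives $\operatorname{Im}\xi_{\J}(F_M)\le\max_{K'}\operatorname{Im}$, which is bounded, so the ratio tends to $0$. The only mildly delicate step is the variance lower bound. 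I will get it by bounding the weighted variance from below by the between-group variance, via the law of total variance applied to the two groups. Everything else is bookkeeping of constants.
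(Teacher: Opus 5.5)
Your proof is correct and follows the paper's route. Part (1) is read off from Proposition~\ref{prop:majority}, and part (2) comes from the explicit formula, with $t_M$ the $1/y_i$-weighted mean of the $x_i$ and $u_M^2$ the weighted mean of $(t_M-x_i)^2+y_i^2$. The only difference is cosmetic: you bound $u_M^2$ below by the between-group variance $\frac{AB}{(A+B)^2}(M-\rho)^2$, whereas the paper keeps a single escaping root's term and uses $t_M+M\geq\delta M$ (from $\lambda_M\leq\lambda_+<1$); both rest on the fact that the bounded cluster carries weight bounded below.
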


\begin{proof}
Part (1) follows immediately from Proposition~\ref{prop:majority}.

\

\noindent  For part (2), denote
\[
A_M:=\sum_{i=1}^{m}\frac{x_i^{(M)}}{y_i^{(M)}},
\qquad
B_M:=\sum_{i=1}^{m}\frac1{y_i^{(M)}},
\qquad
D_M:=\sum_{j=1}^{k}\frac1{y_{j,M}}.
\]
Since $K$ is compact and $y_{j,M}\in[c,C]$, there are positive constants
$A_0, B_0, B_1, D_0,D_1$ independent of $M$ such that
\[
|A_M|\leq A_0,
\qquad
0<B_0\leq B_M\leq B_1,
\qquad
0<D_0\leq D_M\leq D_1.
\]
Proposition~\ref{prop:explicit-hyperbolic-zero} gives
\[
t_M=\frac{A_M-MD_M}{B_M+D_M}.
\]
Thus
\[
t_M=-\lambda_M M+O(1),
\qquad
\lambda_M=\frac{D_M}{B_M+D_M}.
\]
The bounds above imply that there are constants
\[
0<\lambda_-\leq\lambda_M\leq\lambda_+<1
\]
independent of $M$. Hence
\[
|t_M|\asymp M,
\]
which gives the first pair of inequalities.

\noindent  It remains to estimate $u_M$. Let
\[
\alpha_i^{(M)}=x_i^{(M)}+iy_i^{(M)},
\qquad
w_i=\frac1{y_i^{(M)}},
\qquad
q_i=(t_M-x_i^{(M)})^2+(y_i^{(M)})^2.
\]
The formula for the hyperbolic zero can be rewritten as
\[
u_M^2=
\frac{\displaystyle\sum_{i=1}^n q_iw_i}
     {\displaystyle\sum_{i=1}^n w_i}.
\]
Choose any escaping root, say $\alpha_{m+1}^{(M)}=-M+iy_{1,M}$. Then
\[
q_{m+1}\geq(t_M+M)^2,
\qquad
w_{m+1}\geq\frac1C.
\]
Moreover,
\[
\sum_{i=1}^n w_i=B_M+D_M\leq B_1+D_1.
\]
Since
\[
t_M+M=(1-\lambda_M)M+O(1)
\]
and $1-\lambda_M\geq1-\lambda_+>0$, there is a constant $\delta>0$
such that, for all sufficiently large $M$,
\[
t_M+M\geq\delta M.
\]
Therefore
\[
u_M^2
\geq
\frac{q_{m+1}w_{m+1}}{B_M+D_M}
\geq
\frac{\delta^2}{C(B_1+D_1)}\,M^2.
\]
\noindent  Thus $u_M\geq c_3M$ for some $c_3>0$.

\

\noindent  Finally, part (1) implies that $\operatorname{Im}\xi_{\J}(F_M)$ is
uniformly bounded above, whereas $u_M\to\infty$ linearly. Hence
\[
\frac{\operatorname{Im}\xi_{\J}(F_M)}
     {\operatorname{Im}\xi_{\H}(F_M)}
\longrightarrow0.
\]
\end{proof}

\noindent  The following example shows that the preceding proposition is genuinely a
multiple-outlier statement.

\begin{exa}
\label{ex:two-escaping-roots}
Let
\[
\alpha_1=2{\bf i},\qquad
\alpha_2=1+2{\bf i},\qquad
\alpha_3=2+2{\bf i},
\]
and
\[
\alpha_4^{(M)}=-M+{\bf i},\qquad
\alpha_5^{(M)}=-M+2{\bf i}.
\]
Consider
\begin{multline*}
F_M(X,Z)
=
(X^2+4Z^2)(X^2-2XZ+5Z^2)(X^2-4XZ+8Z^2)\\
\times
\bigl(X^2+2MXZ+(M^2+1)Z^2\bigr)
\bigl(X^2+2MXZ+(M^2+4)Z^2\bigr).
\end{multline*}
\noindent  Three of the five upper-half-plane roots remain fixed, while the other two
move horizontally to the left at fixed heights $1$ and $2$. For every $M>0$, the $5$ points $\alpha_1,\alpha_2,\alpha_3,\alpha_4(M),\alpha_5(M)$ are pairwise distinct, so $F_M$ is a stable, totally complex, real binary form of degree $10$. Hence Proposition~\ref{prop:escaping-minority} applies with an escaping-minority configuration with $n=5$, $m=3$, and $k=2$.

\

\noindent Let
\[
\xi_{\H}(F_M)=t_M+iu_M.
\]
By Proposition~\ref{prop:explicit-hyperbolic-zero},
\[
t_M
=
\frac{\left(\dfrac02+\dfrac12+\dfrac22\right)
      -M\left(\dfrac11+\dfrac12\right)}
     {\left(\dfrac12+\dfrac12+\dfrac12\right)
      +\left(\dfrac11+\dfrac12\right)}
=
\frac{1-M}{2}.
\]
Similarly,
\[
u_M^2
=
\frac{\dfrac42+\dfrac52+\dfrac82
      +(M^2+1)+\dfrac{M^2+4}{2}}{3}
-t_M^2
=
\frac{M^2}{4}+\frac{M}{2}+\frac{43}{12}.
\]
Hence
\[
t_M=-\frac M2+\frac12,
\qquad
u_M\sim\frac M2,
\]
and therefore
\[
|\xi_{\H}(F_M)|\asymp M.
\]

\

\noindent  The Julia zero behaves quite differently. We will show that the one-parameter family  $\z_{\J}(F_M)$ has a limit as $M\rightarrow \infty$. 

\

\noindent Indeed, by Proposition~\ref{prop:escaping-minority}(1), $\xi_{\J}(F_M)$ remains in a
fixed compact subset of $\H^2$. It is the minimizer of
\begin{multline*}
H_M(u,v)
=
\log(u^2+v^2+4)
+\log\bigl((u-1)^2+v^2+4\bigr)
+\log\bigl((u-2)^2+v^2+4\bigr)\\
+\log\bigl((u+M)^2+v^2+1\bigr)
+\log\bigl((u+M)^2+v^2+4\bigr)
-5\log v.
\end{multline*}
Subtracting the constant $4\log M$ does not change the minimizer, and
uniformly on compact subsets of $\H^2$,
\[
H_M(u,v)-4\log M\longrightarrow H_\infty(u,v),
\]
where
\[
H_\infty(u,v)
=
\log(u^2+v^2+4)
+\log\bigl((u-1)^2+v^2+4\bigr)
+\log\bigl((u-2)^2+v^2+4\bigr)
-5\log v.
\]

\

\noindent  Since the Julia zeros lie in the fixed compact set \(K'\), the coordinates \(u,v\) are uniformly bounded there, and hence
\[
H_M-4\log M \longrightarrow H_\infty
\]
uniformly on \(K'\). If $M_r\to\infty$ and $\xi_J(F_{M_r})\to P$ along a subsequence, then the
minimizing property of $\xi_J(F_{M_r})$, together with this uniform
convergence, implies that $P$ is a critical point of $H_\infty$: since $K'$ is
compact and hence contained in the interior of $\H^2$, a subsequential limit of
minimizers of functions converging uniformly to $H_\infty$ on $K'$ is itself a
critical point of $H_\infty$ there. We now show that $H_\infty$ has a unique
critical point in $\H^2$ and evaluate it directly. (This unique critical point
will necessarily be equal to $\displaystyle{\lim_{M\to\infty}\xi_J(F_M)}$.)

\

\noindent Differentiating with respect to
$u$ gives
\[
\frac{u}{u^2+v^2+4}
+\frac{u-1}{(u-1)^2+v^2+4}
+\frac{u-2}{(u-2)^2+v^2+4}=0.
\]
Let $x=u-1$ and $W=v^2+4$. After clearing denominators, the numerator is
\[
x\Bigl(3x^4+(6W-4)x^2+3W^2+1\Bigr).
\]
\noindent  Since $W>4$, the factor in parentheses is strictly positive. Thus every
critical point satisfies $u=1.$ At $u=1$, the critical-point equation in $v$ becomes
\[
2v\left(\frac{2}{v^2+5}+\frac{1}{v^2+4}\right)=\frac5v.
\]
Writing $w=v^2$ and clearing denominators gives
\[
w^2-19w-100=0.
\]
\noindent  This equation has exactly one positive solution,
\[
w=\frac{19+\sqrt{761}}2.
\]
\noindent  Hence $H_\infty$ has a unique critical point in $\H^2$, namely
\[
1+i\sqrt{\frac{19+\sqrt{761}}2}.
\]
\noindent  Every minimizer is a critical point, so the compactness established above
implies that the entire family converges:
\[
\lim_{M\to \infty}\xi_{\J}(F_M)=
1+i\sqrt{\frac{19+\sqrt{761}}2}.
\]

\

\noindent  Consequently,
\[
\lim_{M\to \infty} \xi_{\J}(F_M)
=
1+i\sqrt{\frac{19+\sqrt{761}}2},
\qquad
|\xi_{\H}(F_M)|\asymp M.
\]
\noindent  Thus a strict majority of fixed roots keeps Julia's zero in the interior
even when two roots escape simultaneously, while the hyperbolic zero
escapes at linear scale.
\

\begin{remark} It is worth noting that this limit shares its real part with the Julia zero
of the surviving sextic $G(X,Z)=(X^2+4Z^2)(X^2-2XZ+5Z^2)(X^2-4XZ+8Z^2)$
obtained by discarding the two escaping roots, but not its imaginary part.
Indeed $\xi_{\mathcal J}(G)$ minimizes $\sum_{i=1}^3\log((u-x_i)^2+v^2+y_i^2)-3\log v$,
whose $u$-equation is identical to that of $H_\infty$, so both minimizers satisfy $u=1$ by the
same reflection symmetry. The two functionals differ only in the coefficient
of $-\log v$ ($H_\infty$ retains $-5\log v$, inherited from all five conjugate
pairs of $F_M$, while $\xi_{\mathcal J}(G)$'s functional has $-3\log v$), and it
is exactly this extra $-2\log v$ that shifts the limiting height: 
$$\z_{\J}(G)=1+{\bf i}\sqrt{\frac{1+\sqrt{721}}{6}},~~~~~\text{and}~~~~~~\lim_{M\to \infty}\z_{\J}(F_M)=1+{\bf i}\sqrt{\frac{19+\sqrt{761}}{2}}.$$

\noindent The two escaping roots thus
vanish from the real part of the limit entirely, while leaving a permanent
mark on its height.

\end{remark}
\end{exa}

\

\section{Concluding Remarks}

\

\noindent We have compared two reductions of totally complex real binary forms:
Julia's zero and the hyperbolic zero. Although the two constructions come
from different definitions, they can be described by the closely related
equilibrium conditions
\[
\sum_i \tanh(t_i){\bf v}_i=0
\qquad\text{and}\qquad
\sum_i \sinh(t_i){\bf v}_i=0.
\]

\noindent This simple difference between the two equations explains much of what we
have seen in this paper. It also gives a convenient way of deciding when
the two zero maps coincide without having to compute either one explicitly.

\

\noindent The difference between $\tanh t$ and $\sinh t$ becomes particularly clear
when some of the roots move far away. Since $\tanh t$ is bounded, a strict
majority of the roots contained in a fixed compact set is enough to keep
Julia's zero in a compact set, regardless of what happens to the remaining
roots. The strict-majority condition cannot be improved: if only half of
the roots remain controlled, the Julia zero may escape. On the other hand,
an escaping minority can already force the hyperbolic zero to escape at
linear scale. In this sense the threshold $1/2$ gives a sharp distinction
between the influence of distant roots on the two constructions.

\

\noindent There is another difference which is important in computations. The
hyperbolic zero has an explicit formula in terms of the real quadratic
factors of the form. Julia's zero, in general, has to be found by solving
a nonlinear equilibrium problem. The examples considered here show that
the two reductions can produce different representatives of the same
$\Sl_2(\mathbb Z)$-orbit, and in some examples the representative obtained
from the hyperbolic zero has substantially smaller coefficient height.
We do not expect one reduction to always give smaller coefficients than
the other, but it would be useful to understand better when each of them
does.

\

\noindent There are some natural questions left by this comparison. Perhaps the most
immediate one is to characterize, in arbitrary degree, the configurations
for which
\[
\xi_{\J}(F)=\xi_{\H}(F).
\]

\noindent The low-degree results in this paper suggest that there should be a
geometric answer to this question. It would also be interesting to obtain
effective estimates for the distance between the two zero maps in terms of
the configuration of the roots, and to understand more systematically the
relation between either reduction and coefficient height.

\

\noindent Finally, everything here was done for totally complex real forms. This is
the setting in which the upper-half-plane roots give the two equilibrium
problems considered in the paper. It would be interesting to know whether
a similar comparison can be made for real forms having real roots. For
that, one would first need a suitable extension of the hyperbolic zero to
such configurations.

\

\noindent\textbf{Funding.} No funding was received for conducting this study.


\end{document}